\documentclass[12pt]{amsart}
\usepackage{geometry,hyperref}  % See geometry.pdf to learn the layout options. There are lots.
\geometry{a4paper}                  % ... or a4paper or a5paper or ... 
\usepackage{graphicx,color}
\usepackage{amssymb}
\usepackage{epstopdf}
\DeclareGraphicsRule{.tif}{png}{.png}{`convert #1 `dirname #1`/`basename #1 .tif`.png}
\tolerance = 1500 
\hoffset = 0pt
\voffset = 0pt 
\textwidth = 460pt
\textheight = 640pt
\topmargin = 0pt
\headheight = 0pt
\headsep = 30pt
\oddsidemargin = 0pt
\evensidemargin = 0pt
\marginparwidth = -0pt
\marginparsep = 40pt

\newtheorem{thm}{Theorem}[section]
\newtheorem{conj}[thm]{Conjecture}
\newtheorem{lem}[thm]{Lemma}
\newtheorem{rem}[thm]{Remark}
\newtheorem{open}[thm]{Open question}
\newtheorem{cor}[thm]{Corollary}
\newcommand{\be}{\begin{equation}}
\newcommand{\ee}{\end{equation}}
\newcommand{\baa}{\begin{array}}
\newcommand{\eaa}{\end{array}}
\newcommand{\R}{{\mathbb R}}
\newcommand{\N}{{\mathbb N}}

\renewcommand{\tilde}{\widetilde}
\renewcommand{\epsilon}{\varepsilon}

%%%%%%%%%%%%%%%%%%%%%%%%%%%%%%%%%%%%%%%%%%%%%%%
%%%%%%%%%%%%%%%%%%%%%%%%%%%%%%%%%%%%%%%%%%%%%%%

\begin{document}

\title[Fourth-order Allen-Cahn equation in $\R^N$]{One-dimensional symmetry and Liouville type results for the fourth order Allen-Cahn equation in $\mathbb{R}^{N}$}
\thanks{This work has been carried out in the framework of the Labex Archim\`ede (ANR-11-LABX-0033) and of the A*MIDEX project (ANR-11-IDEX-0001-02), funded by the ``Investissements d'Avenir" French Government program managed by the French National Research Agency (ANR). The research leading to these results has also received funding from the European Research Council under the European Union's Seventh Framework Programme (FP/2007-2013) ERC Grant Agreement n.~321186~- ReaDi~- Reaction-Diffusion Equations, Propagation and Modelling and from the ANR NONLOCAL project (ANR-14-CE25-0013). Part of this work was carried out during visits by D. Bonheure and F.~Hamel to Aix-Marseille University and to the Universit\'e Libre de Bruxelles, whose hospitality is thankfully acknowledged. D.~Bonheure is partially supported by INRIA - Team MEPHYSTO, MIS F.4508.14 (FNRS), PDR T.1110.14F (FNRS) \& ARC AUWB-2012-12/17-ULB1- IAPAS}
\author[D. Bonheure]{Denis Bonheure}
\address{Denis Bonheure, D{\'e}partement de Math{\'e}matique, Universit{\'e} libre de Bruxelles,
\newline \indent CP 214,  Boulevard du Triomphe, B-1050 Bruxelles, Belgium
\newline \indent and INRIA - Team MEPHYSTO.}
\email{denis.bonheure@ulb.ac.be}

\author[F. Hamel]{Fran{\c{c}}ois Hamel}
\address{Fran{\c{c}}ois Hamel, Aix Marseille Universit\'e, CNRS, Centrale Marseille,
\newline \indent  Institut de Math\'ematiques de Marseille, UMR 7373, 13453 Marseille, France}
\email{francois.hamel@univ-amu.fr}

\begin{abstract}
In this paper, we prove an analogue of Gibbons' conjecture for the extended fourth order Allen-Cahn equation in $\R^N$, as well as Liouville type results for some solutions converging to the same value at infinity in a given direction. We also prove a priori bounds and further one-dimensional symmetry and rigidity results for semilinear fourth order elliptic equations with more general nonlinearities.
\end{abstract}

\maketitle

\begin{center}
{\it Dedicated to Ha{\"\i}m Brezis with deep admiration}
\end{center}
\medbreak

%%%%%%%%%%%%%%%%%%%%%%%%%%%%%%%%%%%%%%%%%%%%%%%
%%%%%%%%%%%%%%%%%%%%%%%%%%%%%%%%%%%%%%%%%%%%%%%

\section{Introduction and main results}

We consider the equation
\begin{equation}\label{fourth-original}
\gamma \Delta^2 u-\Delta u=u-u^{3} \ \ \ \text{in}\ \ \mathbb{R}^N
\end{equation}
with $\gamma>0$, which is a fourth order model arising in many bistable physical, chemical or biological systems.
When $\gamma=0$ in~(\ref{fourth-original}), we recognize the well known Allen-Cahn equation
\begin{equation}\label{second}
-\Delta u=u-u^{3} \ \ \ \text{in}\ \ \mathbb{R}^N,
\end{equation}
also called the scalar Ginzburg-Landau equation or the FitzHugh-Nagumo equation. When the solution $u$ is positive and the right-hand side is of the type $u-u^2$, this equation is also known as the Fisher or Kolmogorov-Petrovski-Piskunov equation, originally introduced in~1937~\cite{fi,kpp} as a model for studying biological populations. 

After scaling, the equation~\eqref{fourth-original} with $\gamma>0$ can be written as
\begin{equation}\label{fourth}
\Delta^2 u-\beta\Delta u=u-u^{3} \ \ \ \text{in}\ \ \mathbb{R}^N
\end{equation}
with $\beta=1/\sqrt{\gamma}$. Equation~\eqref{fourth} with $\beta>0$ is usually referred to as the extended Fisher-Kolmogorov equation. It was proposed in 1988 by Dee and van Saarloos~\cite{ds} as a higher order model equation for physical systems that are bistable. The term bistable indicates that the equation~\eqref{second} and its extended version~\eqref{fourth} have two uniform stable states $u(x)=\pm 1$ separated by a third uniform state $u(x)=0$ which is unstable. When $\beta$ is negative, equation~(\ref{fourth}) is related to the stationary version of the Swift-Hohenberg equation
\begin{equation*}
\frac{\partial u}{\partial t} - \kappa u + (1+\Delta)^2 u + u^3 = 0
\end{equation*}
where $\kappa\in\R$. This equation was proposed by Swift and Hohenberg~\cite{sh} as a model in the study of Rayleigh-B\'enard convection. When $\kappa>1$, this equation can be transformed after multiplication and scaling into
$$\frac{\partial u}{\partial t} + (\kappa-1)^{3/2}[ \Delta^{2}u - \beta \Delta u+u^3-u] = 0 $$ with $\beta=-2/\sqrt{\kappa-1}<0.$

For these model equations, a question of great interest is the existence of time-independent phase transitions, i.e. solutions that connect two uniform states in one spatial direction. The second order equation~\eqref{second} has been the subject of a tremendous amount of publications in the past 30 years. Its popularity certainly comes in part from several challenging conjectures raised by De Giorgi~\cite{dg} in 1978. The most famous one, still not completely solved, is:

\begin{conj}[De Giorgi's conjecture]\label{conj:DG}
Suppose that $u$ is a bounded entire solution of~\eqref{second} such that $u_{x_{N}}(x)>0$ for every $x\in\R^{N}$. Then the level sets of $u$ are hyperplanes, at least in dimension $N\le 8$.
\end{conj}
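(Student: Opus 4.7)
The plan is to exploit the fact that a monotone solution is automatically stable, and to reduce the rigidity question to a Liouville property for the quotients $\sigma_i=u_{x_i}/u_{x_N}$. As a first step I would differentiate \eqref{second} with respect to each variable, obtaining that $v_i:=u_{x_i}$ solves the linearized equation $-\Delta v_i=(1-3u^2)v_i$ in $\RN$. The assumption $v_N=u_{x_N}>0$ then furnishes a positive solution of the linearization, and a standard Berestycki--Caffarelli--Nirenberg argument yields the stability inequality
\[
\int_{\RN} (3u^2-1)\,\xi^{2}\,dx \;\le\; \int_{\RN} |\nabla\xi|^{2}\,dx \qquad \text{for all } \xi\in C^{1}_{c}(\RN).
\]

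The second step is the geometric reformulation \emph{\`a la} Sternberg--Zumbrun. Since $v_N>0$ everywhere, each $\sigma_i$ ($i=1,\dots,N-1$) is a well-defined smooth function, and combining the equations for $v_i$ and $v_N$ gives $-\mathrm{div}(v_N^{2}\nabla\sigma_i)=0$ in $\RN$. Testing this weighted equation against $\sigma_i\eta^{2}$, with $\eta$ a cut-off function, produces the Poincar\'e-type identity
\[
\int_{\RN} v_N^{2}\,|\nabla\sigma_i|^{2}\,\eta^{2}\,dx \;\le\; \int_{\RN} v_N^{2}\,\sigma_i^{2}\,|\nabla\eta|^{2}\,dx.
\]
The whole strategy now reduces to controlling the right-hand side by a quantity that vanishes when $\eta$ tends to the indicator of $\RN$.

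In dimension $N=2$, a logarithmic cut-off together with the boundedness of $u$ (hence of $v_N$ and of $\sigma_i v_N=u_{x_i}$) makes the right-hand side go to zero, so $\nabla\sigma_i\equiv 0$ and $u$ is one-dimensional. In dimension $N=3$ one would follow Ambrosio--Cabr\'e, combining the above weighted Poincar\'e identity with a linear-in-$R$ energy estimate $\int_{B_R}|\nabla u|^{2}\le C R^{N-1}$ deduced from the stability inequality and energy comparison with one-dimensional competitors.

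The truly hard part, and the reason the conjecture is so delicate, is the step from $N=3$ to $4\le N\le 8$: the weighted Poincar\'e approach alone no longer kills the ratios $\sigma_i$, because the Dirichlet energy may grow faster than $R^{N-1}$. To push the argument through, one has to bring in ideas from the regularity theory of stable minimal hypersurfaces, in the spirit of Savin, and the whole machinery at present only closes if one additionally assumes the uniform convergence $u(x',x_N)\to\pm1$ as $x_N\to\pm\infty$. Any proof I would attempt would therefore most likely stall precisely at this passage, namely converting the weighted Poincar\'e identity into a sharp energy estimate in dimensions $4\le N\le 8$ without the extra asymptotic hypothesis; removing that hypothesis is, to my knowledge, the central open issue in Conjecture \ref{conj:DG}.
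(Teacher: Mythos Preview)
The paper does \emph{not} prove this statement: it is Conjecture~\ref{conj:DG}, quoted as background and motivation for the fourth-order analogue. The paper merely records its status --- proved for $N=2$ (Berestycki--Caffarelli--Nirenberg, Ghoussoub--Gui) and $N=3$ (Ambrosio--Cabr\'e), still open for $4\le N\le 8$ except under the extra hypothesis~\eqref{limpm1} (Savin), and false for $N\ge 9$ (del Pino--Kowalczyk--Wei). There is therefore no ``paper's own proof'' to compare against.

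Your write-up is not a proof either, and you say so yourself in the last paragraph. What you have written is an accurate and well-organized summary of the known strategy: linearize, use $u_{x_N}>0$ to deduce stability, pass to the weighted divergence equation for the ratios $\sigma_i=u_{x_i}/u_{x_N}$, and then try to run a Liouville argument on $\mathrm{div}(v_N^{2}\nabla\sigma_i)=0$. Your account of why this closes for $N=2$ (log cut-off plus $L^2$ gradient bound) and for $N=3$ (Ambrosio--Cabr\'e energy estimate $\int_{B_R}|\nabla u|^2\le CR^{N-1}$) is correct, and your diagnosis of the obstruction in dimensions $4\le N\le 8$ --- the lack of a sufficiently sharp energy growth estimate without the asymptotic condition~\eqref{limpm1} --- matches the consensus in the literature. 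So there is no mathematical gap to flag beyond the one you have already flagged: you have not proved the conjecture, because nobody has in the range $4\le N\le 8$.

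One small correction of emphasis: Savin's argument for $4\le N\le 8$ under~\eqref{limpm1} is not a refinement of the weighted Poincar\'e approach you outline, but a genuinely different regularity-theoretic method (improvement of flatness for level sets, modeled on the Bernstein problem). If you want your survey to be complete, it is worth saying explicitly that the passage from $N=3$ to higher dimensions requires a change of technique, not just a sharper estimate within the same framework.
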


De Giorgi's conjecture has been proved in dimension $N=2$, see \cite[Theorem 1.9]{bcn} and~\cite[Theorem 1.1]{gg}  and in dimension $N=3$, see \cite[Theorem 1.1]{ac}. The conjecture is still open when $4\le N \le 8$, though a positive answer was given in~\cite{s} under the additional assumption
\begin{equation}
\label{limpm1}
\lim_{x_N \to \pm\infty} u(x_1, \dots, x_{N}) = \pm 1
\quad \hbox{for every $x'=(x_1, \dots, x_{N-1} ) \in \R^{N-1}$}
\end{equation}
(see also~\cite{fv2} for more general conditions), while in dimension $N\ge9$ a counterexample has been established in~\cite{dpkw}. We refer to the survey~\cite{fv1} for more details.

On the contrary, up to our knowledge, the fourth order extension~\eqref{fourth} of the Allen-Cahn equation has been only investigated in one spatial dimension, see for instance~\cite{pt4}. In dimension $N>1$, we are only aware of~\cite{Fonseca:2000,hps} where $\Gamma$-limits of scaled energy functionals associated to~(\ref{fourth}) were investigated, and~\cite{bfs} where the authors look for qualitative properties of solutions in a bounded domain under Navier boundary conditions.

The aim of our present work is to make a first study of bounded solutions of~(\ref{fourth}) in any space dimension and in particular to establish one-dimensional symmetry and related Liouville type results.

First of all, we state the analogue of De Giorgi's conjecture for the fourth order Allen-Cahn equation~\eqref{fourth}.

\begin{conj}[Analogue of De Giorgi's conjecture for the fourth order Allen-Cahn equation~\eqref{fourth}]\label{conj:DG-EFK}
Suppose that $\beta\ge \sqrt 8$ and that $u$ is a bounded entire solution of~\eqref{fourth} such that $u_{x_{N}}(x)>0$ for every $x\in\R^{N}$. Then the level sets of~$u$ are hyperplanes, at least in dimension $N\le 8$. 
\end{conj}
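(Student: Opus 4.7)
The plan is to follow the dimension-by-dimension strategy that succeeded for the classical De Giorgi conjecture, treating separately $N=2$, $N=3$, and $4\le N\le 8$, and adapting each step to the fourth order operator $L := \Delta^2 - \beta \Delta$. The common thread is that the monotonicity $u_{x_N}>0$ should imply stability of $u$ for the associated energy, and stability combined with dimensional constraints forces one-dimensional symmetry. Concretely, I would first establish the stability inequality
\begin{equation*}
Q(\varphi) := \int_{\R^N} \bigl((\Delta \varphi)^2 + \beta |\nabla \varphi|^2 + (3u^2-1)\varphi^2\bigr)\, dx \ge 0
\end{equation*}
for all $\varphi \in C_c^\infty(\R^N)$, by inserting $\varphi = v\psi$ for cutoffs $\psi$ and using that $v := u_{x_N}>0$ solves the linearized equation $Lv = (1-3u^2)v$.

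In dimension $N=2$, I would attempt the Berestycki-Caffarelli-Nirenberg / Ghoussoub-Gui reduction: setting $\sigma := u_{x_1}/u_{x_N}$, derive the linear fourth order equation satisfied by $\sigma$ and exploit the stability inequality applied to $\varphi = v\sigma\psi$ to force $\sigma$ to be constant, whence $u$ depends on only one variable. For $N=3$, I would follow the Ambrosio-Cabré strategy, which reduces the symmetry statement to the energy bound $E(u; B_R) = O(R^{N-1})$ as $R\to\infty$; here I would try to produce such an estimate by comparing $u$ in large slabs with one-dimensional heteroclinic profiles, which exist and are monotone precisely when $\beta \ge \sqrt 8$. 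For $4\le N\le 8$, I would invoke Savin's blow-down scheme, likely under an additional limit condition of the form~\eqref{limpm1}, combined with a $\Gamma$-convergence result identifying the limit of the rescaled fourth order Allen-Cahn energy as a multiple of the perimeter functional in the spirit of Fonseca-Mantegazza, and conclude via the flatness of entire minimal graphs in dimension $\le 7$.

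The main obstacle at every step is the absence of a maximum principle for $L$, which invalidates direct use of sliding, moving plane, and comparison techniques. This infects the stability inequality, whose proof requires more delicate integration by parts because the quadratic form involves $(\Delta \varphi)^2$ and mixed terms; the Liouville type argument on $\sigma$, where there is no pointwise comparison with barriers; and above all Savin's blow-down analysis, whose second order proof uses elliptic comparison crucially to construct super- and subsolutions and whose fourth order counterpart for \emph{non-minimizing} monotone solutions seems presently out of reach. The threshold $\beta\ge \sqrt 8$ is the fourth order analogue of the classical monotonicity condition and is sharp already in one dimension, since for $\beta < \sqrt 8$ the heteroclinic connecting $\pm 1$ oscillates and the one-dimensional form of the conjecture itself fails, so no weakening of this hypothesis can be hoped for.
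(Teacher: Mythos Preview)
The statement you are addressing is labeled \emph{Conjecture} in the paper, and the paper does not prove it. There is no ``paper's own proof'' to compare against: Conjecture~\ref{conj:DG-EFK} is explicitly posed as an open problem, motivated by analogy with the second order case and by $\Gamma$-limit considerations, and the authors then turn to the weaker Gibbons-type statement (Theorem~\ref{th1}) and the Liouville result (Theorem~\ref{th2}), both under \emph{uniform} limits in $x_N$ rather than the mere monotonicity hypothesis.

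Your proposal is therefore not a proof but a research outline, and you yourself identify the decisive gap: every step of the second order strategy---the stability inequality via a positive solution of the linearized equation, the Ghoussoub--Gui Liouville argument on $\sigma=u_{x_1}/u_{x_N}$, the Ambrosio--Cabr\'e energy bound, and Savin's improvement-of-flatness---relies at some point on the second order maximum principle or on comparison with barriers, and none of these has a known substitute for the operator $\Delta^2-\beta\Delta$. In particular, the derivation of the stability inequality from $u_{x_N}>0$ already fails in the form you sketch: for a fourth order equation the identity one gets by testing the linearized equation against $v\psi^2$ produces cross terms involving $\Delta v\,\Delta\psi$ and $\nabla v\cdot\nabla\psi$ that do not combine into a nonnegative quadratic form without additional sign information on $\Delta v$, which is unavailable. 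The paper circumvents exactly this difficulty in its own results by assuming uniform convergence at $x_N\to\pm\infty$ and decomposing the equation as a coupled second order system $-\Delta v+\lambda v=-w$, $-\Delta w+\tilde\lambda w=-b(x)v$ for which a bespoke half-space comparison (Lemmas~\ref{lem3} and~\ref{lem4}) can be proved; but that machinery needs the uniform limits and does not apply under monotonicity alone.
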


This is the expected extension for the fourth order Allen-Cahn equation~\eqref{fourth} of the famous De Giorgi's conjecture for the second order Allen-Cahn equation~\eqref{second}. Indeed, as for~\eqref{second}, the conjecture for~\eqref{fourth} is supported by a $\Gamma$-limit analysis~\cite{Fonseca:2000,hps} and the convergence to the area functional (and the Bernstein problem). Therefore we expect the conjecture to hold true only for $N\le 8$ as for the Allen-Cahn equation. Regarding the condition $\beta\ge\sqrt{8}$ (which means $0<\gamma\le 1/8$ in~\eqref{fourth-original}), it comes from the fact that monotone one-dimensional solutions do not exist for $0\le\beta<\sqrt{8}$, see for instance~\cite{pt4} and Section~\ref{sec1d} below. In fact, one can even wonder whether there exist non-planar standing fronts, i.e. satisfying~\eqref{limpm1}, when $0\le\beta<\sqrt 8$, due to the rich dynamics of bounded solutions in this range of the parameter. 

\begin{open}[Existence of non planar standing fronts for the fourth order Allen\-Cahn equation~\eqref{fourth}]{\rm If $N\ge 2$ and $0\le \beta <\sqrt 8$, are there any solutions of~\eqref{fourth} satis\-fying~\eqref{limpm1} and whose level sets are not hyperplanes~?}
\end{open}

Motivated by the developments on Conjecture~\ref{conj:DG}, we propose to study Conjecture~\ref{conj:DG-EFK} in connection with the additional assumption~\eqref{limpm1} when the limits are uniform in~$(x_1, \dots, x_{N-1} ) \in \R^{N-1}$, namely
\begin{equation}\label{uniform}
\lim_{x_N\to\pm\infty} u(x_1,\dots, x_{N}) = \pm 1
\quad \hbox{uniformly in  $(x_1, \dots, x_{N-1} ) \in \R^{N-1}$.}
\end{equation}
For the second order Allen-Cahn equation~\eqref{second}, under the assumption~\eqref{uniform}, De Giorgi's conjecture is known as Gibbons' conjecture~\cite{g}. In this case, the restriction on the dimension and the monotonicity assumption $\partial_{x_N}u(x)>0$ are unnecessary and Gibbons' conjecture is true for any $N\ge 1$: indeed, it was proved in~\cite{bbg,bhm,f1} that, for any $N\ge 1$, any bounded solution $u$ of~\eqref{second} satisfying~\eqref{uniform} is one-dimensional and thus only depends on the variable $x_N$, that is $u(x)=u(x_N)=\tanh(x_N/\sqrt{2}+a)$ for some $a\in\R$. Furthermore,~$u$ is monotone increasing in $x_N$. We also refer to~\cite{f2,fv3} for further results with discontinuous nonlinearities and in a more abstract setting covering nonlocal operators and fully nonlinear equations.

In our first main result, we prove a result related to Gibbons' conjecture for the fourth order equation~\eqref{fourth}. Again the dimension does not play any role here, but an  additional quantitative bound on $u$ is assumed.

\begin{thm}\label{th1}
For any integer $N\ge 1$ and any real number $\beta\ge\sqrt{8}$, if $u$ is a classical bounded solution of~\eqref{fourth} in $\R^N$ satisfying the uniform limits~\eqref{uniform} and~$\|u\|_{L^{\infty}(\R^N)}<\sqrt{5}$, then $u$ only depends on the variable $x_N$ and is increasing in $x_N$.
\end{thm}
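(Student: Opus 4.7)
The plan is to prove the monotonicity of $u$ in $x_{N}$ by a sliding argument adapted to the fourth-order setting, and then to upgrade monotonicity to one-dimensional symmetry by a second sliding comparing horizontal translates of $u$. The key algebraic observation is that, since $\beta\ge\sqrt{8}$, the polynomial $X^{2}-\beta X+2$ has two positive real roots $a,b$ with $a+b=\beta$ and $ab=2$, so the fourth-order operator
\[
L_{0}:=\Delta^{2}-\beta\Delta+2I=(\Delta-a)(\Delta-b)=(\Delta-b)(\Delta-a)
\]
is a composition of two second-order operators, each with a positive zero-th order coefficient. Since $-\Delta+a$ and $-\Delta+b$ both satisfy the weak and strong maximum principles in $\R^{N}$ for bounded functions vanishing at infinity, iterating gives a fourth-order maximum principle: any bounded $\phi\in C^{4}(\R^{N})$ with $L_{0}\phi\ge 0$ and $\phi(x)\to 0$ as $|x_{N}|\to\infty$ uniformly in $x'$ is nonnegative, and strictly positive unless identically zero. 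Together with interior $W^{4,p}$ estimates, the uniform limits~\eqref{uniform} then force $u$ and all its derivatives up to order three to vanish, uniformly in $x'\in\R^{N-1}$, as $|x_{N}|\to\infty$.

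The sliding step proceeds as follows. Set $u^{\tau}(x',x_{N}):=u(x',x_{N}+\tau)$ and $v_{\tau}:=u^{\tau}-u$. Subtracting~\eqref{fourth} written for $u^{\tau}$ and for $u$, and adding $2v_{\tau}$ to both sides, yields
\[
L_{0}v_{\tau}=q_{\tau}(x)\,v_{\tau},\qquad q_{\tau}:=3-\bigl(u^{2}+u\,u^{\tau}+(u^{\tau})^{2}\bigr),
\]
and the hypothesis $\|u\|_{L^{\infty}(\R^{N})}<\sqrt{5}$ keeps $q_{\tau}$ in a bounded range. Define $\tau^{*}:=\inf\{\tau>0:v_{\tau'}\ge 0\text{ in }\R^{N}\text{ for every }\tau'\ge\tau\}$; the uniform decay toward $\pm 1$, combined with a $C^{4}_{\mathrm{loc}}$-compactness argument localising any negative excursion of $v_{\tau}$ inside a bounded horizontal slab, implies $\tau^{*}<\infty$. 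The goal is then $\tau^{*}=0$. Assuming $\tau^{*}>0$ for contradiction, continuity yields $v_{\tau^{*}}\ge 0$, and the vanishing of $v_{\tau^{*}}$ at $|x_{N}|=\infty$ gives $\inf v_{\tau^{*}}=0$. This infimum is either attained at some point $x_{0}\in\R^{N}$, or only approached along a sequence $(x'_{k},x_{N,k})$; in the latter case the uniform limits force $x_{N,k}$ to remain bounded, and translating $u_{k}(\cdot):=u(\cdot+(x'_{k},0))$ and extracting a $C^{4}_{\mathrm{loc}}$-limit reduces to the former. At a touching point, the strong maximum principle is applied successively to $w:=(\Delta-b)v_{\tau^{*}}$ through the equation $(\Delta-a)w=q_{\tau^{*}}v_{\tau^{*}}$, and then to $v_{\tau^{*}}$ itself; the bound on $\|u\|_{\infty}$ places $q_{\tau^{*}}$ in the range where the two second-order maximum principles can be chained, yielding $v_{\tau^{*}}\equiv 0$, that is, $u$ is $\tau^{*}$-periodic in $x_{N}$, which contradicts~\eqref{uniform}.

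Hence $\tau^{*}=0$, so $u_{x_{N}}\ge 0$ in $\R^{N}$, and the strong maximum principle applied to the linearised equation $L_{0}u_{x_{N}}=(3-3u^{2})u_{x_{N}}$ upgrades this to $u_{x_{N}}>0$ everywhere. For the one-dimensional symmetry, for every $y'\in\R^{N-1}$ the translate $u_{y'}(x',x_{N}):=u(x'+y',x_{N})$ solves~\eqref{fourth}, inherits the same uniform limits and the same $L^{\infty}$ bound, and a second round of the sliding argument comparing $u_{y'}$ with the $x_{N}$-translates $u^{\tau}$ forces $u_{y'}\equiv u$; thus $u$ depends only on $x_{N}$. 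The principal obstacle is the fourth-order strong maximum principle at the touching point: the factorisation of $L_{0}$ is available only thanks to $\beta\ge\sqrt{8}$, and the pointwise bound $\|u\|_{\infty}<\sqrt{5}$ is what keeps the potential $q_{\tau^{*}}$ in a range in which the two successive second-order maximum principles can be chained without a sign obstruction.
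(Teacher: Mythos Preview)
Your overall strategy—factor the linearised operator as $(\Delta-a)(\Delta-b)$ with $a+b=\beta$, $ab=2$, and run a sliding method—matches the paper's. But there is a genuine gap: you never establish, nor invoke, the a priori bound $\|u\|_{L^\infty(\R^N)}\le 1$, and your chaining of the two second-order maximum principles cannot be completed without it. Concretely, if $v_{\tau^*}\ge 0$ and $w=(\Delta-b)v_{\tau^*}$, then from $(\Delta-a)w=q_{\tau^*}v_{\tau^*}$ you need $q_{\tau^*}v_{\tau^*}\ge 0$ to force $w\le 0$; only then does $(\Delta-b)v_{\tau^*}=w\le 0$ together with an interior zero of $v_{\tau^*}$ yield $v_{\tau^*}\equiv 0$. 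But $q_\tau\ge 0$ is equivalent to $u^2+u\,u^\tau+(u^\tau)^2\le 3$, which holds when $|u|\le 1$ and fails badly otherwise: under the bare hypothesis $\|u\|_{L^\infty}<\sqrt 5$ this quadratic form can be close to $15$, making $q_\tau$ as negative as $-12$. At $\beta=\sqrt 8$ no other admissible choice of $ab\le\beta^2/4=2$ repairs this. The paper bridges $\|u\|_{L^\infty}<\sqrt 5$ to $\|u\|_{L^\infty}\le 1$ by a separate nontrivial step (Lemma~\ref{lem1} and Corollary~\ref{corbounds}); that reduction is the actual content of the $\sqrt 5$ hypothesis and is absent from your argument.

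There is a second gap in the sliding itself. Your claim that ``the uniform limits force $x_{N,k}$ to remain bounded'' when the infimum $\inf v_{\tau^*}=0$ is approached is unjustified: since $v_{\tau^*}\to 0$ as $|x_N|\to\infty$, the value $0$ is \emph{always} approached at infinity, so no interior touching point need exist, and no contradiction follows from $\tau^*>0$ alone. Likewise, your justification of $\tau^*<\infty$ by ``compactness localising any negative excursion inside a slab'' does not explain why $v_\tau\ge 0$ in that slab for $\tau$ large. The paper handles both issues by (i) sliding \emph{two} quantities simultaneously, $v_\tau$ and $w_\tau=\Delta v_\tau-\lambda v_\tau$, and (ii) proving half-space weak comparison principles (Lemmas~\ref{lem3} and~\ref{lem4}) that exploit the strict monotonicity of $f$ near $\pm 1$; these are used to start the sliding at large $\tau$, to obtain strict inequalities on a bounded slab at $\tau^*$, and then to propagate the comparison from the slab back to the complementary half-spaces for $\tau$ slightly below $\tau^*$. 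Your one-component sliding and compactness sketch supply neither ingredient.
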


We expect that the assumption $\|u\|_{L^{\infty}(\R^N)}<\sqrt 5$ can be removed though we have to leave that question as open for the moment (see the open question~\ref{openbounds} in Section~\ref{sec3} below). This bound can be slightly relaxed for $\beta > \sqrt 8$ and replaced by a bound depending on~$\beta$ (see Corollary~\ref{cor44} in Section~\ref{sec4} below). In fact, as $\beta\to+\infty$, this quantitative a priori bound becomes somehow just qualitative. Furthermore, we prove in Section~\ref{sec4} a more general version of Theorem~\ref{th1} with a more general right-hand side $f(u)$ instead of $u-u^3$ in~\eqref{fourth}, as well as some further results for solutions which are a priori assumed to range in one side of $\pm 1$ (see Corollaries~\ref{cor43} and~\ref{cor44} below).

On the other hand, the assumption~$\beta\ge\sqrt{8}$ is necessary for the conclusion of Theorem~\ref{th1} to hold, since, even in dimension $N=1$, no bounded monotone solutions of~\eqref{fourth} exist if~$0\le\beta<\sqrt{8}$, see~\cite{pt4}. However one can still ask whether a weaker statement than Theorem \ref{th1} hold or not when~$0\le\beta<\sqrt{8}$, namely if its content holds true without the conclusion regarding the monotonicity in the direction $x_{N}$.

Our second main result deals with a Liouville type property for solutions which are uniformly asymptotic to the same equilibria in a given direction.

\begin{thm}\label{th2}
For any integer $N\ge 1$ and any real number $\beta\ge\sqrt{8}$, if $u$ is a classical bounded solution of~\eqref{fourth} in $\R^N$ satisfying
\begin{equation}\label{uniform2}\left\{\begin{array}{ll}
\displaystyle\!\!\!\lim_{x_N\to\pm\infty}\!u(x_1,\dots,x_N)\!=\!-1\hbox{ uniformly in }(x_1,\dots,x_{N-1})\!\in\!\R^{N-1} & \!\!\!\!\displaystyle\hbox{and }\sup_{\R^N}u<1,\\
\hbox{or} & \\
\displaystyle\!\!\!\lim_{x_N\to\pm\infty}\!u(x_1, \dots, x_{N})\!=\!1\hbox{ uniformly in }(x_1,\dots,x_{N-1})\!\in\!\R^{N-1} & \!\!\!\!\displaystyle\hbox{and }\inf_{\R^N}u>-1,\end{array}\right.
\end{equation}
then $u$ is constant, that is, $u=1$ in $\R^N$ or $u=-1$ in $\R^N$
\end{thm}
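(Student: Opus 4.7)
The plan is as follows. By the oddness of $u-u^3$ and the invariance of~\eqref{fourth} under $u\mapsto -u$, I would first reduce to the first alternative in~\eqref{uniform2}: $u\to-1$ uniformly as $x_N\to\pm\infty$ and $\sup_{\R^N}u<1$; the goal is then to show $u\equiv -1$. Writing $v:=u+1$, the equation recasts as
\[
(\Delta^2-\beta\Delta+2)v=v^2(3-v),
\]
and the bound $\sup v<2<3$ makes the right-hand side pointwise nonnegative. The assumption $\beta\ge\sqrt 8$ would be exploited via the factorization $\Delta^2-\beta\Delta+2=(\Delta-r_+)(\Delta-r_-)$ with $r_\pm=\tfrac12(\beta\pm\sqrt{\beta^2-8})>0$, which reduces fourth-order maximum-principle questions to iterated second-order ones.

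The first step I would establish is the pointwise lower bound $v\ge 0$, i.e.\ $u\ge -1$. Setting $\tilde w:=(\Delta-r_-)v$, one has $(\Delta-r_+)\tilde w=v^2(3-v)\ge 0$. Interior elliptic estimates applied to the equation ensure that the uniform limits $u\to -1$ at $|x_N|\to\infty$ propagate to all derivatives of $u$, so $\tilde w\to 0$ uniformly in $x'$ as $|x_N|\to\infty$. I would then run a translation-compactness argument in the $x'$-variable: extract a supremum-realizing sequence $(x^k)$ for $\tilde w$, whose $N$-th coordinate must stay bounded, translate in the first $N-1$ coordinates, and use Arzel\`a-Ascoli to pass to a limiting solution $\tilde w_\infty$ that attains $\sup\tilde w$ at an interior point. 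At that maximum the inequality $\Delta\tilde w_\infty\le 0$ is incompatible with $\Delta\tilde w_\infty\ge r_+\tilde w_\infty>0$, forcing $\tilde w\le 0$, i.e.\ $\Delta v\le r_-v$. Re-running the same translation-and-maximum-principle scheme on $v$ itself then excludes $\inf v<0$ and yields $v\ge 0$ in $\R^N$.

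The second step is to promote this lower bound to equality, $v\equiv 0$. The super-solution inequality $-\Delta v+r_-v\ge 0$ combined with $v\ge 0$ and the uniform decay at infinity in $x_N$ would yield exponential decay of $v$ in $|x_N|$ uniformly in $x'$, via comparison with a one-variable exponential barrier of rate $\sqrt{r_-}$; bootstrapping in the equation transfers this decay to the derivatives up to order three. Multiplying the equation by $v\,\phi_R(x')^2$ with $\phi_R$ a standard cutoff in $x'$, integrating by parts on $\R^N$ and letting $R\to\infty$ would yield the energy identity
\[
\int_{\R^N}\bigl(|\Delta v|^2+\beta|\nabla v|^2+v^2(v-1)(v-2)\bigr)\,dx=0.
\]
The main obstacle is the sign of the integrand $v^2(v-1)(v-2)$, which is nonnegative on $[0,1]\cup[2,\infty)$ but strictly negative on $(1,2)$, so one must rule out the possibility $\sup v\in[1,2)$. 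I would address this via a further translation-compactness extraction that would produce a limiting solution $v^*\ge 0$ attaining its supremum in $[1,2)$, and then exclude such a $v^*$ by combining the one-dimensional rigidity and one-sided a priori bounds of Section~\ref{sec4} (reducing $v^*$ to a one-variable profile) with a Hamiltonian analysis of the fourth-order ODE $\phi''''-\beta\phi''=\phi-\phi^3$ with $\phi(\pm\infty)=-1$, which admits no nontrivial bounded profile with $\sup\phi\in[0,1)$ when $\beta\ge\sqrt 8$. Once $\sup v\le 1$ is secured, every term in the above identity is nonnegative and $v\equiv 0$, i.e.\ $u\equiv -1$.
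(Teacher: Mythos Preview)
Your first step---showing $v=u+1\ge 0$ via the factorization $(\Delta-r_+)(\Delta-r_-)$ and a translation-compactness maximum-principle argument---is correct and essentially reproves the paper's Corollary~\ref{coroneside} in this special case.

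The second step has a genuine gap. The energy identity
\[
\int_{\R^N}\bigl(|\Delta v|^2+\beta|\nabla v|^2+v^2(v-1)(v-2)\bigr)\,dx=0
\]
is unavailable in dimension $N\ge 2$: $v$ decays only in $x_N$, not in $x'$, so none of the three integrands lies in $L^1(\R^N)$ and the cutoff limit $R\to\infty$ does not produce a finite identity (for a one-dimensional profile each term is $(\text{volume in }x')\times(\text{finite})$). Your subsidiary exponential-decay claim also points the wrong way: the inequality $-\Delta v+r_-v\ge 0$ makes $v\ge 0$ a \emph{super}solution of $-\Delta+r_-$, which satisfies the minimum principle, so no upper barrier comparison is available from it. Finally, even granting the identity, your plan to exclude $\sup v\in[1,2)$ by ``reducing $v^*$ to a one-variable profile via Section~\ref{sec4}'' is circular: the only rigidity statement there for a solution with equal limits at both ends is Theorem~\ref{th4}, which is precisely the general form of what you are proving.

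The paper avoids energy integrals entirely. After the a priori bound $-1\le u\le 1$ (your first step, equivalently Corollary~\ref{coroneside}), it takes the one-dimensional heteroclinic $\phi:\R\to[-1,1]$ with $\phi(\pm\infty)=\pm1$, whose existence for $\beta\ge\sqrt8$ is known, and runs the sliding method on the \emph{pair} $\bigl(u-\phi_\tau,\ \Delta(u-\phi_\tau)-\lambda(u-\phi_\tau)\bigr)$ with $\lambda\in\{r_+,r_-\}$ and $\phi_\tau(x_N)=\phi(x_N+\xi-\tau)$. A two-component weak maximum principle in half-spaces (Lemmas~\ref{lem3} and~\ref{lem4}) replaces the missing sign in your integrand; it uses only that $f$ is decreasing near $\pm1$ and that~\eqref{hypomega} holds with $\omega=2$. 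Sliding $\tau\downarrow 0$ yields $\phi(\xi+x_N)\le u(x)$ for every $\xi$ (working in the alternative $u\to 1$, $\inf u>-1$, to which your case reduces by $u\mapsto -u$), and sending $\xi\to+\infty$ forces $u\equiv 1$. The kink $\phi$ thus serves as the barrier that your energy approach lacks.
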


In Theorem~\ref{th2}, in contrast to Theorem~\ref{th1}, there is no quantitative a priori bound on the $L^{\infty}(\R^N)$ norm of $u$. Indeed, the condition~\eqref{uniform2} contains in particular the fact that $u$ is a priori assumed to be on one side of $-1$ or $1$, and such bounded solutions then automatically range in $[-1,1]$ (see Corollary~\ref{corbounds} below, and Corollaries~\ref{coroneside} and~\ref{cor43} for further results). On the other hand, as for Theorem~\ref{th1}, the condition $\beta\ge\sqrt{8}$ is necessary for the conclusion to hold, since for $0\le\beta<\sqrt{8}$, even in dimension $N=1$, there are bounded solutions of~\eqref{fourth} and satisfying~\eqref{uniform2} which are not constant, see e.g.~\cite{b,kv,kkv,pt2}. Notice lastly that, thanks to the interior elliptic estimates (see Section~\ref{sec3}), if $u$ is a classical bounded solution of~\eqref{fourth} in $\R^N$ such that $u-1\in L^p(\R^N)$ and $\inf_{\R^N}u>-1$ (resp. $u+1\in L^p(\R^N)$ and~$\sup_{\R^N}u<1$) for some $p\in[1,+\infty)$, then $u(x)\to 1$ (resp. $-1$) as $|x|\to+\infty$, whence~\eqref{uniform2} is automatically fulfilled. As a consequence of Theorem~\ref{th2}, it follows that if $\beta\ge\sqrt{8}$ and if $u$ is a classical bounded solution of~\eqref{fourth} with $u\pm 1\in L^p(\R^N)$ for some~$p\in[1,+\infty)$ and $\inf_{\R^N}|u\mp1|>0$, then $u$ is constant, i.e. $u=\mp 1$ in $\R^N$.

For the second order Allen-Cahn equation~\eqref{second}, any bounded solution $u$ satisfies automatically $\|u\|_{L^{\infty}(\R^N)}\le1$ and either $u=\pm1$ in $\R^N$ or $-1<u<1$ in~$\R^N$, as an imme\-diate consequence of the strong maximum principle. Furthermore, if $u(x_1,\dots,x_N)\to1$ (resp.~$-1$) as $x_N\to\pm\infty$ uniformly in $(x_1,\dots,x_{N-1})\in\R^{N-1}$, then $\inf_{\R^N}u>-1$ (resp.~$\sup_{\R^N}u<1$). By comparing $u$ with shifts of the one-dimensional profile $\tanh(x_N/\sqrt{2})$, it follows that $u$ is constant equal to $1$ (resp. $-1$). This Liouville-type result, which in the one-dimensional case follows from e.g.~\cite{aw,fm}, is actually a special case of more general results demonstrated in~\cite{f4}. It can also be proved with the same method --in the simpler second order case-- as Theorem~\ref{th2} of the present paper and provides an alternative proof to~\cite[Theorem 2]{carbou}. Further Liouville type results for the solutions $u$ of Ginzburg-Landau systems can be found in~\cite{f0,f3}.  

As opposed to the second order Allen-Cahn equation~\eqref{second}, there are bounded solutions~$u$ of the fourth order Allen-Cahn equation~\eqref{fourth} satisfying $\|u\|_{L^{\infty}(\R^N)}>1$, for $0\le\beta<\sqrt{8}$, see e.g. \cite[Chapters 4 \& 5]{pt4}. For $\beta\ge\sqrt{8}$, the bound $\|u\|_{L^{\infty}(\R^N)}\le1$ is expected. But even if $\beta\ge\sqrt{8}$ and $u$ is a priori assumed to satisfy $\|u\|_{L^{\infty}(\R^N)}\le1$ (the bound $\|u\|_{L^{\infty}(\R^N)}<\sqrt{5}$ is actually suffi\-cient to have~$\|u\|_{L^{\infty}(\R^N)}\le1$), it is still an open question to know whether the limits $u(x_1,\dots,x_N)\to1$ (resp. $-1$) as $x_N\to\pm\infty$ uniformly in $(x_1,\dots,x_{N-1})\in\R^{N-1}$ imply $\inf_{\R^N}u>-1$ (resp. $\sup_{\R^N}u<1$). This is why these properties are a priori assumed simultaneously in~\eqref{uniform2}.

\begin{rem}\label{remclassical}{\rm
In Theorems~\ref{th1} and~\ref{th2}, we point out that if $u\in L^{\infty}(\R^N)$ solves~\eqref{fourth} in the sense of distributions, then it is automatically of class $C^{\infty}$ (and in particular, it is a classical solution), see the beginning of Section~\ref{sec3} for more details.}
\end{rem}

\noindent{\bf{Outline.}} The rest of the paper is organized as follows. We first review in Section~\ref{sec1d} some results in the one-dimensional case $N=1$. Section~\ref{sec3} is concerned with the proof of some a priori bounds, for fourth order equations with a right-hand side $f(u)$ more general than~$u-u^3$, when $\beta>0$ is either any positive real number or a large enough real number. These bounds are obtained by writing the fourth order equation~\eqref{fourth} as a system of two second order elliptic equations, see~\eqref{defv}-\eqref{eqv} below. We point out that this system does not satisfy the maximum principle in general. Only the structure of the equation will be used and specific arguments will be developed to establish some a priori bounds. Lastly, Theorems~\ref{th1} and~\ref{th2} are proved in Section~\ref{sec4}. There, we will use a sliding method, inspired by second order equations. More precisely, one uses a sliding method for both $u$ and a function involving $\Delta u$ and one proves that both functions are simultaneously strictly monotone in any direction which is not orthogonal to $x_N$. To do so, for $\beta>0$ large enough, the fourth order equation~\eqref{fourth} is decomposed as a system of two second order equations for which we prove some weak comparison principles in half spaces (see Lemmas~\ref{lem3} and~\ref{lem4} below).

%%%%%%%%%%%%%%%%%%%%%%%%%%%%%%%%%%%%%%%%%%%%%%%
%%%%%%%%%%%%%%%%%%%%%%%%%%%%%%%%%%%%%%%%%%%%%%%

\section{A quick review of the $1D$ case}\label{sec1d}

The study of the equation 
\begin{equation}\label{eq:sta}
u'''' - \beta u'' = u - u^3
\end{equation}
for positive values of the parameter $\beta$ goes back at least to Peletier and Troy in~\cite{pt1,pt2} where they proved, among other things, the existence of kinks for all $\beta>0$. Van den Berg~\cite{vdb2} proved that, when $\beta\ge\sqrt{8}$, the bounded solutions of~(\ref{eq:sta}) behave like the bounded solutions of the stationary Allen-Cahn equation
$$-u''=u-u^3$$
This implies that there exist two kinks (up to translations), one monotone increasing from~$-1$ to~$+1$ and its symmetric, while there are no pulses. When $0\le\beta<\sqrt{8}$, the set of kinks and pulses is much more rich. For this range of $\beta$, kinks and pulses cannot be monotone anymore as at $\beta=\sqrt{8}$ both equilibria $\pm 1$ bifurcate from saddle-nodes to saddle-foci. The linearization of~$(\ref{eq:sta})$ around the equilibria then shows that the solutions oscillate when they are close to $\pm 1$ with small derivatives up to the third order. As $\beta$ becomes smaller than~$\sqrt{8}$, infinitely many kinks and pulses appear. Peletier and Troy~\cite{pt2} proved the existence of two infinite sequences of both kinks and pulses. The two sequences of kinks consist of odd kinks having $2n+1$ zeros and differ in the amplitude of the oscillations. The pulses are even with $2n$ zeros. Again, the two sequences can be distinguished according to the amplitude of the oscillations. Other families of kinks and pulses were shown to exist~\cite{b,kkv,kv}. Basically, these solutions can be distinguished by the number of jumps from~$-1$ to~$+1$ and the oscillations around these equilibria in between the jumps. The complex structure of these solutions can be quantified by defining homotopy classes, see~\cite{kkv}.

Different methods have been used to deal with equation~(\ref{eq:sta}). Peletier and Troy introduced in~\cite{pt1,pt2} a topological shooting method that can be used to track kinks and pulses as well as periodic solutions. In~\cite{pvt}, it is shown that kinks and periodic solutions can be obtained using variational arguments. For instance, if $\beta\ge 0$, the functional
\begin{equation*}
J_{\beta}(u)=\int_{-\infty}^{+\infty}\Big[\frac{1}{2}\big((u''{}^2)+\beta u'{}^2\big)+\frac{1}{4}(u^2-1)^2\Big]\,dx
\end{equation*}
has a minimum in the function space $X=\chi + H^2(\R)$ where $\chi$ is a $C^\infty$ function that satisfies $\chi(x)=-1$ for $x\le -1$ and $\chi(x)=1$ for $x\ge 1$. When $\beta\ge \sqrt{8}$, this minimizer is the unique heteroclinic connection from $-1$ to $+1$, while for $\beta < \sqrt{8}$ it is called the principal heteroclinic as it only has one zero.

The dynamics of equation~(\ref{eq:sta}) with $\beta<0$ is much less understood. Numerical experiments~\cite{vdb1} suggest that a large variety of the solutions found for $\beta$ positive still exist for a certain range of negative values of $\beta$.

In the study of ternary mixtures containing oil, water and amphiphile, a modification of a Ginzburg-Landau model yields for the free energy a functional of the form (see~\cite{gs})
$${\Phi}(u)=\int_{\R^N}[c(\nabla^2u)^2+g(u)\vert\nabla u\vert^2+f(u)]\,dx\,dy\,dz,$$
where the scalar order parameter $u$ is related to the local difference of concentrations of water and oil. The function $g(u)$ quantifies the amphiphilic properties and the ``potential''~$f(u)$ is the bulk free energy of the ternary mixture. In some relevant situations $g$ may take negative values to an extent that is balanced by the positivity of $c$ and $f$.

The admissible density profiles may therefore be identified with critical points of ${\Phi}$ in a suitable function space. In the simplest case where the order parameter depends only on one spatial direction, $u=u(x)$ is defined on the real line and (after scaling) the functional becomes\begin{equation}
\label{eq:functional} {F}(u)=
\int_{-\infty}^{+\infty}\Big[\frac{1}{2}\big(u''{}^2+g(u)u'{}^2\big)+f(u)\Big]\,dx
\end{equation}
whose Euler-Lagrange equation is given by
\begin{equation*}
u''''-g(u)u''-\frac{g'(u)u'{}^2}{2}+f'(u)=0.
\end{equation*}
This model has been considered in~\cite{b,bhs,bstt}. It appears also as a simplification of a nonlocal model due to Andelman et al.~Ê\cite{Kawakatsu:1993}. We refer to Leizarowitz and Mizel~\cite{Leizarowitz:1989}, Coleman, Marcus, and Mizel \cite{coleman:1992} and Mizel, Peletier and Troy~\cite{Mizel1998}. For studies in higher dimension, we refer to Fonseca and Mantegazza~\cite{Fonseca:2000}, Chermisi et al.~\cite{Chermisi:2011} and Hilhorst et al.~\cite{hps}. In the the last quoted paper, the Hessian $\nabla^{2} u$ is replaced by $\Delta u$ as a simplification of the model. This second order energy functional with the Hessian matrix of $u$ replaced by $\Delta u$ was also proposed as model for phase-field theory of edges in anisotropic crystals by Wheeler~\cite{Wheeler:2006}. Finally, we also mention the study of amphiphilic films in~\cite{leibler:1987}.

Functionals of the form~(\ref{eq:functional}) were considered with either a double-well or a triple-well potential $f$ and a function $g$ that can change sign. The case of a triple-well is especially relevant in the theory of ternary mixtures. We refer to~\cite{bbook,bs,pt4} for further references.

%%%%%%%%%%%%%%%%%%%%%%%%%%%%%%%%%%%%%%%%%%%%%%%
%%%%%%%%%%%%%%%%%%%%%%%%%%%%%%%%%%%%%%%%%%%%%%%

\section{A priori bounds}\label{sec3}

In this section, we consider fourth order equations of the type~\eqref{fourth} with any positive real number $\beta>0$ and with a more general right-hand side. Namely, we consider the equation
\begin{equation}\label{fourth2}
\Delta^2 u-\beta\Delta u=f(u) \ \ \ \text{in}\ \ \mathbb{R}^N,
\end{equation}
where $f:\R\to\R$ is any locally Lipschitz-continuous function. We first notice that, from standard elliptic interior estimates~\cite{bjs,dn}, any bounded solution $u$ of~\eqref{fourth2} in the sense of distributions is actually of class $C^{4,\alpha}(\R^N)$, for any $\alpha\in[0,1)$. Therefore, bounded solutions are nothing but classical bounded solutions. By iteration, it follows that, if $f$ is $C^{\infty}(\R)$, then the bounded solutions $u$ are of class $C^{\infty}(\R^N)$ with bounded derivatives at any order. In particular, any bounded solution $u$ of the fourth order Allen-Cahn equation~\eqref{fourth} in the sense of distributions is of class $C^{\infty}(\R^N)$ and has bounded derivatives at any order.

Our goal in this section is to get some a priori pointwise bounds for the bounded solutions of~\eqref{fourth2}, in terms of the function $f$ appearing in the right-hand side. The key-step is given in the following lemma. To lighten some subscript expressions, we use repeatedly the notations
$$M_u:=\sup_{\R^N}u\ \hbox{ and }\ m_u:=\inf_{\R^N}u,$$
where $u$ is a given function.

\begin{lem}\label{lem1}
Let $\beta>0$ and let $f:\R\to\R$ be locally Lipschitz-continuous. If $u$ is a bounded solution of~\eqref{fourth2}, then 
$$\sup_{0<\mu\le\beta^2/4}\!\!\Big[\min_{m_u\le s\le M_{u}}\Big(\frac{f(s)}{\mu}+s\Big)\Big]\le\inf_{\R^N}u\le\sup_{\R^N}u\le\inf_{0<\mu\le\beta^2/4}\!\!\Big[\max_{m_{u}\le s\le M_{u}}\Big(\frac{f(s)}{\mu}+s\Big)\Big]\!.$$
\end{lem}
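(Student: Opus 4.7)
The plan is to decompose the fourth-order equation into a second-order system depending on a free parameter $\mu$, and then to combine two pointwise inequalities obtained at the suprema of $u$ and of an auxiliary function. Fix $\mu\in(0,\beta^2/4]$ and let $\lambda,\nu>0$ be the roots of $t^2-\beta t+\mu=0$, so that $\lambda+\nu=\beta$ and $\lambda\nu=\mu$. Setting $v:=-\Delta u+\lambda u$, a direct substitution in~\eqref{fourth2} shows that
\begin{equation*}
-\Delta u+\lambda u=v,\qquad -\Delta v+\nu v=f(u)+\mu u\quad\text{in }\R^N.
\end{equation*}
By the interior elliptic estimates recalled at the beginning of the section, $u$ has bounded derivatives up to order four, so $v$ is itself a bounded $C^2$ function on $\R^N$; set $M_v:=\sup_{\R^N}v$.

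Next I would realise $M_u$ and $M_v$ as actual maxima through a translation-compactness argument. Pick $x_n,y_n\in\R^N$ with $u(x_n)\to M_u$ and $v(y_n)\to M_v$, and apply the $C^{4,\alpha}$ estimates to the translated solutions $u(\cdot+x_n)$ and $u(\cdot+y_n)$. Along a subsequence these converge in $C^4_{\mathrm{loc}}(\R^N)$ to classical bounded solutions $\tilde u$ and $\bar u$ of~\eqref{fourth2} with $m_u\le\tilde u,\bar u\le M_u$, while $\tilde v:=-\Delta\tilde u+\lambda\tilde u$ and $\bar v:=-\Delta\bar u+\lambda\bar u$ converge in $C^2_{\mathrm{loc}}(\R^N)$ and solve the same second-order system. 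By construction $\tilde u(0)=M_u=\max_{\R^N}\tilde u$, and, using that translates do not increase the supremum of $v$, $\bar v(0)=M_v=\max_{\R^N}\bar v$.

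At these two points I would then read off pointwise inequalities using that the Laplacian is non-positive at an interior maximum. Since $\Delta\tilde u(0)\le 0$, the first equation at $0$ gives
\begin{equation*}
M_v\ge\tilde v(0)=-\Delta\tilde u(0)+\lambda M_u\ge\lambda M_u;
\end{equation*}
since $\Delta\bar v(0)\le 0$, the second equation at $0$ together with $m_u\le\bar u(0)\le M_u$ gives
\begin{equation*}
\nu M_v\le f(\bar u(0))+\mu\,\bar u(0)\le\max_{m_u\le s\le M_u}\bigl(f(s)+\mu s\bigr).
\end{equation*}
Chaining these two inequalities yields $\lambda\nu M_u\le\max_{m_u\le s\le M_u}(f(s)+\mu s)$, i.e.\ $M_u\le\max_{m_u\le s\le M_u}(f(s)/\mu+s)$ since $\mu=\lambda\nu$. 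Taking the infimum over $\mu\in(0,\beta^2/4]$ gives the right-hand inequality of the lemma. The left-hand inequality follows by applying the same reasoning to $-u$, which solves $\Delta^2(-u)-\beta\Delta(-u)=\tilde f(-u)$ with $\tilde f(s):=-f(-s)$ still locally Lipschitz.

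The main obstacle is that the second-order system just written down does \emph{not} satisfy a maximum principle: one cannot read off a pointwise bound on $v$ from its equation alone, because the sign of $v$ is not controlled. The way around this is exactly the two-step translation-compactness argument above: one produces a maximum for $u$ (yielding $\lambda M_u\le M_v$ from the first equation) and, independently, a maximum for $v$ (yielding $\nu M_v\le\max_{m_u\le s\le M_u}(f(s)+\mu s)$ from the second equation), and then chains the two inequalities. The only other care to be taken is a standard compactness/regularity check: one must verify that along the sequences of translates the functions $u$ and $v$ converge together in strong enough topology that the limit pair still satisfies the system and the prescribed maximum values.
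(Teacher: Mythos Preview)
Your proposal is correct and follows essentially the same approach as the paper. The only cosmetic difference is a sign convention: the paper sets $v=\Delta u-\lambda u$ and works with its infimum $m_v$, whereas you set $v=-\Delta u+\lambda u$ and work with its supremum $M_v$; the translation-compactness argument, the two pointwise inequalities at the realized extrema, and the chaining step are identical.
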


\begin{proof} Remember first that $u$ is actually a classical solution of~\eqref{fourth2} with bounded and H\"older continuous derivatives up to the fourth order. Fix any real number $\mu\in(0,\beta^2/4]$ and let $\lambda\in(0,\beta)$ be such that
$$\lambda\,(\beta-\lambda)=\mu.$$

{\it Step 1: proof of the right inequality.} Define the $C^2$ bounded function
\begin{equation}\label{defv}
v=\Delta u-\lambda\,u.
\end{equation}
The function $v$ solves
\begin{equation}\label{eqv}
\Delta v-(\beta-\lambda)\,v=\Delta^2u-\beta\Delta u+\lambda\,(\beta-\lambda)\,u=f(u)+\mu\,u\ \hbox{ in }\R^N.
\end{equation}\par
Let $(x_n)_{n\in\N}$ be a sequence of points in $\R^N$ such that $u(x_n)\to M_u$ as $n\to+\infty$. Denote
$$u_n(x)=u(x+x_n)\ \hbox{ and }\ v_n(x)=v(x+x_n)$$
for all $n\in\N$ and $x\in\R^N$. It follows from the aforementioned interior estimates and Ascoli-Arzela theorem that, up to extraction of a subsequence, the functions $u_n$ and $v_n$ converge locally uniformly in $\R^N$ to two functions $u_{\infty}\in C^4(\R^N)$ and $v_{\infty}\in C^2(\R^N)$ solving~\eqref{fourth2},~\eqref{defv} and~\eqref{eqv} with $(u_{\infty},v_{\infty})$ instead of $(u,v)$. Furthermore, $u_{\infty}(0)=M_u\ge u_{\infty}(x)$ for all~$x\in\R^N$. Therefore, $\Delta u_{\infty}(0)\le0$ and
$$v_{\infty}(0)=\Delta u_{\infty}(0)-\lambda\,u_{\infty}(0)\le-\lambda\,u_{\infty}(0)=-\lambda\,M_u.$$
On the other hand, $v_{\infty}\ge m_v$ in $\R^N$, whence
\begin{equation}\label{Muv}
m_v\le-\lambda\,M_u.
\end{equation}\par
Similarly, let $(\xi_n)_{n\in\N}$ be a sequence of points in $\R^N$ such that $v(\xi_n)\to m_v$ as $n\to+\infty$ and denote
$$U_n(x)=u(x+\xi_n)\ \hbox{ and }\ V_n(x)=v(x+\xi_n).$$
As in the previous paragraph, up to extraction of a subsequence, the functions $U_n$ and~$V_n$ converge locally uniformly in $\R^N$ to two functions $U_{\infty}\in C^4(\R^N)$ and $V_{\infty}\in C^2(\R^N)$ sol\-ving~\eqref{fourth2},~\eqref{defv} and~\eqref{eqv} with $(U_{\infty},V_{\infty})$ instead of $(u,v)$. Furthermore,
$$V_{\infty}(0)=m_v\le V_{\infty}(x)\ \hbox{ for all }x\in\R^N.$$
Therefore,
$$f(U_{\infty}(0))+\mu\,U_{\infty}(0)=\Delta V_{\infty}(0)-(\beta-\lambda)\,V_{\infty}(0)\ge-(\beta-\lambda)\,V_{\infty}(0)=-(\beta-\lambda)\,m_v.$$
Using the fact that $0<\lambda<\beta$ and $\lambda\,(\beta-\lambda)=\mu$, one infers from~\eqref{Muv} that
$$f(U_{\infty}(0))+\mu\,U_{\infty}(0)\ge(\beta-\lambda)\,\lambda\,M_u=\mu\,M_u.$$
Since $\inf_{\R^N}u\le U_{\infty}\le\sup_{\R^N}u$ in $\R^N$ and $\mu>0$, it follows that
$$\sup_{\R^N}u=M_u\le\max_{m_u\le s\le M_{u}}\Big(\frac{f(s)}{\mu}+s\Big)$$
and since $\mu\in(0,\beta^2/4]$ was arbitrary, the right inequality in the conclusion of Lemma~\ref{lem1} follows.

\medbreak

{\it Step 2: proof of the left inequality.} Define $\tilde{u}=-u$ and $\tilde{v}=-v$ and observe that $\tilde{v}=\Delta\tilde{u}-\lambda\,\tilde{u}$ and $\Delta\tilde{v}-(\beta-\lambda)\,\tilde{v}=g(\tilde{u})+\mu\,\tilde{u}$ in $\R^N$ with $g(s)=-f(-s)$. Step 1 implies that, for every $0<\mu\le\beta^2/4$,
$$-\inf_{\R^N}u=\sup_{\R^N}\tilde{u}\le\max_{m_{\tilde{u}}\le s\le M_{\tilde{u}}}\Big(\frac{g(s)}{\mu}+s\Big)=-\min_{m_u\le s'\le M_u}\Big(\frac{f(s')}{\mu}+s'\Big)$$
by setting $s=-s'$. Since $\mu\in(0,\beta^2/4]$ can be arbitrary, the left inequality in the conclusion of Lemma~\ref{lem1} follows and the proof of Lemma~\ref{lem1} is thereby complete.
\end{proof}

\begin{rem}{\rm For the fourth-order Allen-Cahn equation~\eqref{fourth}, the estimates given in Lemma~\ref{lem1} improve the bounds given in~\cite{pt3,vdb2}, even in dimension $N=1$, if $\sqrt{2}\le\beta\le\sqrt{8}$. It can also be used to improve the bound given in \cite{hps} for a range of the parameter.}
\end{rem}

In the proof of Theorems~\ref{th1} and~\ref{th2}, we shall first show that the considered solutions~$u$ of~\eqref{fourth} range in the interval $[-1,1]$. To do so, we need some conditions on $\beta$, namely,~$\beta$ should not be too small (since there are some one-dimensional solutions of~\eqref{fourth} satis\-fying~\eqref{uniform} and not ranging in $[-1,1]$ when $0\le\beta<\sqrt{8}$). More generally speaking, for the equation~\eqref{fourth2}, we will give some improved pointwise estimates depending on $\beta$. We assume that, as far as the locally Lipschitz-continuous function $f:\R\to\R$ is concerned, there exist two real numbers $\alpha_-<\alpha_+$ such that
\begin{equation}\label{hypf}
f(\alpha_{\pm})=0,\ \ f>0\hbox{ on }(-\infty,\alpha_-),\ \ f<0\hbox{ on }(\alpha_+,+\infty)\ \hbox{ and }\ f\not\equiv 0\hbox{ on }[\alpha_-,\alpha_+].
\end{equation}
There is then a smallest real number $\beta_f>0$ such that
\begin{equation}\label{betaf}
\forall\,\mu\ge\frac{\beta_f^2}{4},\ \forall\,s\in[\alpha_-,\alpha_+],\ \ \alpha_-\le\frac{f(s)}{\mu}+s\le\alpha_+.
\end{equation}
For the function $f(u)=u-u^3$ in~\eqref{fourth}, $\alpha_{\pm}=\pm 1$ and $\beta_f=\sqrt{8}$. But, here, apart from the fact that $f$ is not identically equal to $0$ on the interval $[\alpha_-,\alpha_+]$, we do not assume anything about the behavior of $f$ on this interval and about its number of sign changes. 

The following lemma provides some sufficient conditions depending on $\beta$ and on $u$ for a solution $u$ of~\eqref{fourth2} to range in the interval $[\alpha_-,\alpha_+]$.

\begin{lem}\label{lem2}
Let $f$ be a locally Lipschitz-continuous function satisfying~\eqref{hypf} and let $\beta_f>0$ be as in~\eqref{betaf}. Then there is a nonincreasing map $m:[\beta_f,+\infty)\to[-\infty,\alpha_-)$ and there is a nondecreasing map $M:[\beta_f,+\infty)\to(\alpha_+,+\infty]$ such that, if $\beta\ge\beta_f$ and $u$ is any classical bounded solution of~\eqref{fourth2} with
\begin{equation}\label{mMbeta}
m(\beta)<\inf_{\R^N}u\le\sup_{\R^N}u<M(\beta),
\end{equation}
then
\begin{equation}\label{iteration}
\alpha_-\le u\le\alpha_+\ \hbox{ in }\R^N.
\end{equation}
Furthermore, $m(\beta)\to-\infty$ and $M(\beta)\to+\infty$ as $\beta\to+\infty$.
\end{lem}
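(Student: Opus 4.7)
The plan is to choose $m(\beta)$ and $M(\beta)$ so that, combined with condition~\eqref{betaf}, the conclusion of Lemma~\ref{lem1} applied with the extremal choice $\mu=\beta^2/4\ge\beta_f^2/4$ forces the range of $u$ into $[\alpha_-,\alpha_+]$. Explicitly, for $\beta\ge\beta_f$ I set
$$m(\beta):=\inf\Bigl\{t\le\alpha_-\;:\;f(s)\le\tfrac{\beta^2}{4}(\alpha_+-s)\ \hbox{for every}\ s\in[t,\alpha_-]\Bigr\}$$
and
$$M(\beta):=\sup\Bigl\{T\ge\alpha_+\;:\;f(s)\ge-\tfrac{\beta^2}{4}(s-\alpha_-)\ \hbox{for every}\ s\in[\alpha_+,T]\Bigr\}.$$
Since $f(\alpha_\pm)=0$ and $f$ is continuous, each defining set contains a strict one-sided neighborhood of $\alpha_-$ or $\alpha_+$, so $m(\beta)\in[-\infty,\alpha_-)$ and $M(\beta)\in(\alpha_+,+\infty]$. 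The defining conditions become weaker when $\beta$ grows, hence $m$ is nonincreasing and $M$ is nondecreasing. Since $f$ is bounded on every compact interval, the quotient $f(s)/(\alpha_+-s)$ (resp.\ $-f(s)/(s-\alpha_-)$) is bounded on $[-R,\alpha_-]$ (resp.\ $[\alpha_+,R]$) for any $R>0$; these intervals fall into the respective defining sets once $\beta^2/4$ exceeds the corresponding bound, yielding $m(\beta)\to-\infty$ and $M(\beta)\to+\infty$ as $\beta\to+\infty$.

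To prove~\eqref{iteration} I would argue by contradiction. Write $\mu:=\beta^2/4$ and $g(s):=f(s)/\mu+s$, and assume $M_u>\alpha_+$. I split $[m_u,M_u]$ into the three (possibly empty) pieces $[m_u,\alpha_-]$, $[\alpha_-,\alpha_+]$, $[\alpha_+,M_u]$ and estimate $g$ on each: on $[m_u,\alpha_-]$, the strict inequality $m_u>m(\beta)$ combined with the definition of $m(\beta)$ gives $g(s)\le\alpha_+$; on $[\alpha_-,\alpha_+]$, condition~\eqref{betaf} with $\mu\ge\beta_f^2/4$ again gives $g(s)\le\alpha_+$; on $[\alpha_+,M_u]$ one has $g(\alpha_+)=\alpha_+<M_u$, while $f(s)<0$ for $s\in(\alpha_+,M_u]$ yields $g(s)<s\le M_u$, so by continuity on a compact interval $\max_{[\alpha_+,M_u]}g<M_u$. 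Combining, $\max_{[m_u,M_u]}g<M_u$, which contradicts the right-hand inequality in Lemma~\ref{lem1}. Hence $M_u\le\alpha_+$; a symmetric argument using $M_u<M(\beta)$ and the left-hand inequality in Lemma~\ref{lem1} forces $m_u\ge\alpha_-$, and~\eqref{iteration} follows.

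The main technical care needed is the strict-versus-nonstrict bookkeeping on the piece $[\alpha_+,M_u]$: a pointwise strict bound on $g$ does not automatically give a strict bound on its maximum over $[\alpha_+,M_u]$, so one must combine compactness with the boundary value $g(\alpha_+)=\alpha_+<M_u$ guaranteed by the contradiction hypothesis $M_u>\alpha_+$. Beyond that, the proof is simply a packaging of the structural inequalities into the definitions of $m(\beta)$ and $M(\beta)$, plus one application of Lemma~\ref{lem1} in each direction.
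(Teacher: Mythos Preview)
Your argument is correct, but the route differs from the paper's in two linked ways. First, the definitions: the paper sets
\[
m(\beta)=\sup\Bigl\{s\le\alpha_-:\ \tfrac{4f(s)}{\beta^2}+s=\alpha_-+\alpha_+-s\Bigr\},\qquad
M(\beta)=\inf\Bigl\{s\ge\alpha_+:\ \tfrac{4f(s)}{\beta^2}+s=\alpha_-+\alpha_+-s\Bigr\},
\]
i.e.\ it compares $g(s)=4f(s)/\beta^2+s$ with the \emph{affine} function $\alpha_-+\alpha_+-s$ rather than with the constants $\alpha_\pm$ that you use. Second, the contradiction: you treat the two bounds independently (the hypothesis $m_u>m(\beta)$ alone forces $M_u\le\alpha_+$, and symmetrically), whereas the paper first shows that $M_u>\alpha_+$ and $m_u<\alpha_-$ must occur \emph{together}, and then derives from the two affine comparisons the incompatible inequalities $M_u+m_u<\alpha_-+\alpha_+$ and $M_u+m_u>\alpha_-+\alpha_+$.

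What each approach buys: yours is shorter and more transparent, and it proves the lemma exactly as stated. The paper's coupled argument, however, yields strictly sharper thresholds. For $f(s)=s-s^3$ one computes the paper's $M(\beta)=\sqrt{1+\beta^2/2}$ (so $M(\sqrt8)=\sqrt5$), while your definition gives $M(\beta)=\tfrac{1+\sqrt{1+\beta^2}}{2}$ (so $M(\sqrt8)=2$). Since the constant $\sqrt5$ feeds directly into Corollary~\ref{corbounds} and Theorem~\ref{th1}, your construction proves the lemma but would weaken those downstream quantitative statements. If you want to match the paper's explicit bounds, you would need the affine comparison and the coupling; if only the qualitative content of Lemma~\ref{lem2} is needed, your proof suffices.
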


\begin{proof}
We begin by defining the functions $m$ and $M$. For any $\beta\ge\beta_f$, we denote
\be\label{defmbeta}
m(\beta)=\sup\Big\{s\in(-\infty,\alpha_-];\ \frac{4\,f(s)}{\beta^2}+s=\alpha_-+\alpha_+-s\Big\}
\ee
and
\be\label{defMbeta}
M(\beta)=\inf\Big\{s\in[\alpha_+,+\infty);\ \frac{4\,f(s)}{\beta^2}+s=\alpha_-+\alpha_+-s\Big\},
\ee
with the usual convention that $\sup\emptyset=-\infty$ and $\inf\emptyset=+\infty$. Notice that $m(\beta)<\alpha_-$ and $\alpha_+<M(\beta)$ since $f$ is continuous with $f(\alpha_{\pm})=0$, and $\alpha_-<\alpha_+$. Furthermore, since~$f>0$ on $(-\infty,\alpha_-)$, it follows that $\beta\mapsto m(\beta)$ is nonincreasing on $[\beta_f,+\infty)$ and even decreasing on the interval, if any, where it is finite. Similarly, since $f<0$ on $(\alpha_+,+\infty)$, the function $\beta\mapsto M(\beta)$ is nondecreasing on $[\beta_f,+\infty)$ and even increasing on the interval, if any, where it is finite. Lastly, since $\alpha_-<\alpha_+$, it is immediate to see that $m(\beta)\to-\infty$ and $M(\beta)\to+\infty$ as $\beta\to+\infty$. \par
Consider now any classical bounded solution $u$ of~\eqref{fourth2} with $\beta\ge\beta_f$, and assume that~$\sup_{\R^N}u>\alpha_+$. Lemma~\ref{lem1} implies that
\begin{equation}\label{infsup}
\min_{m_u\le s\le M_u}\Big(\frac{4\,f(s)}{\beta^2}+s\Big)\le\inf_{\R^N}u\le\sup_{\R^N}u\le\max_{m_u\le s\le M_u}\Big(\frac{4\,f(s)}{\beta^2}+s\Big).
\end{equation}
By~\eqref{hypf}, there holds $f(\alpha_+)=0$ and $4f(s)/\beta^2+s<s$ for all $s\in(\alpha_+,+\infty)$, whence
\begin{equation}\label{max1}
\max_{\alpha_+\le s\le M_u}\Big(\frac{4\,f(s)}{\beta^2}+s\Big)<\sup_{\R^N}u.
\end{equation}
On the other hand,
\begin{equation}\label{max2}
\max_{\alpha_-\le s\le\alpha_+}\Big(\frac{4\,f(s)}{\beta^2}+s\Big)\le\alpha_+<\sup_{\R^N}u
\end{equation}
by~\eqref{betaf} (and $\beta\ge\beta_f$) and our assumption $\alpha_+<\sup_{\R^N}u$. It then follows from~\eqref{infsup},~\eqref{max1} and~\eqref{max2} that $\inf_{\R^N}u<\alpha_-$ and
$$\sup_{\R^N}u\le\max_{m_u\le s\le\alpha_-}\Big(\frac{4\,f(s)}{\beta^2}+s\Big).$$\par
Similarly, if $u$ is any classical bounded solution of~\eqref{fourth2} with $\beta\ge\beta_f$ and $\inf_{\R^N}u<\alpha_-$, then $\sup_{\R^N}u>\alpha_+$ and
$$\min_{\alpha_+\le s\le M_u}\Big(\frac{4\,f(s)}{\beta^2}+s\Big)\le\inf_{\R^N}u.$$\par
Consider now any classical bounded solution $u$ of~\eqref{fourth2} with $\beta\ge\beta_f$ and~\eqref{mMbeta}, that is,
$$m(\beta)<\inf_{\R^N}u\le\sup_{\R^N}u<M(\beta),$$
and assume that the conclusion~\eqref{iteration} does not hold (that is, either $\sup_{\R^N}u>\alpha_+$ or~$\inf_{\R^N}u<\alpha_-$). It then follows from the previous two paragraphs that these last two properties hold simultaneously and that
\begin{equation}\label{infsup2}
\min_{\alpha_+\le s\le M_u}\Big(\frac{4\,f(s)}{\beta^2}+s\Big)\le\inf_{\R^N}u<\alpha_-<\alpha_+<\sup_{\R^N}u\le\max_{m_u\le s\le\alpha_-}\Big(\frac{4\,f(s)}{\beta^2}+s\Big).
\end{equation}
Since $\inf_{\R^N}u>m(\beta)$ by assumption, it follows from the definition of $m(\beta)$ and $f(\alpha_-)=0$ with $\alpha_-<\alpha_+$, that
$$\frac{4\,f(s)}{\beta^2}+s<\alpha_-+\alpha_+-s\ \hbox{ for all }s\in\Big[\inf_{\R^N}u,\alpha_-\Big],$$
whence
\be\label{supinfalpha}
\sup_{\R^N}u\le\max_{m_u\le s\le\alpha_-}\Big(\frac{4\,f(s)}{\beta^2}+s\Big)<\max_{m_u\le s\le\alpha_-}(\alpha_-+\alpha_+-s)=\alpha_-+\alpha_+-\inf_{\R^N}u
\ee
by~\eqref{infsup2}. Similarly, since $\sup_{\R^N}u<M(\beta)$, one infers from the definition of $M(\beta)$ that~$4\,f(s)/\beta^2+s>\alpha_-+\alpha_+-s$ for all $s\in[\alpha_+,\sup_{\R^N}u]$, whence
\be\label{supinfalpha2}
\alpha_-+\alpha_+-\sup_{\R^N}u=\min_{\alpha_+\le s\le M_u}(\alpha_-+\alpha_+-s)<\min_{\alpha_+\le s\le M_u}\Big(\frac{4\,f(s)}{\beta^2}+s\Big)\le\inf_{\R^N}u
\ee
by ~\eqref{infsup2}. The inequalities~\eqref{supinfalpha} and~\eqref{supinfalpha2} being impossible simultaneously, one has reached a contradiction. As a conclusion, the property~\eqref{iteration} holds and the proof of Lemma~\ref{lem2} is thereby complete.
\end{proof}

An interesting lesson of the proof of Lemma~\ref{lem2} is the fact that, when $\beta\ge\beta_f$, the conditions $\sup_{\R^N}u>\alpha_+$ and $\inf_{\R^N}u<\alpha_-$ hold simultaneously, independently of the a priori bounds~\eqref{mMbeta}. In other words, any of the two inequalities in~\eqref{iteration} implies the other one, as stated in the following corollary.

\begin{cor}\label{coroneside}
Let $f$ be a locally Lipschitz-continuous function satisfying~\eqref{hypf} and let~$\beta_f>0$ be as in~\eqref{betaf}. If $\beta\ge\beta_f$ and $u$ is a classical bounded solution of~\eqref{fourth2} such that either $u\le\alpha_+$ in $\R^N$ or $u\ge\alpha_-$ in $\R^N$, then $\alpha_-\le u\le\alpha_+$ in $\R^N$.
\end{cor}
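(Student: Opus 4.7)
The plan is to observe that the corollary is essentially a contrapositive restatement of facts already established inside the proof of Lemma~\ref{lem2}. In that proof, two key implications were derived, valid for every $\beta\ge\beta_f$ and every classical bounded solution $u$ of~\eqref{fourth2}:
$$\sup_{\R^N}u>\alpha_+\ \Longrightarrow\ \inf_{\R^N}u<\alpha_-,\qquad \inf_{\R^N}u<\alpha_-\ \Longrightarrow\ \sup_{\R^N}u>\alpha_+.$$
The derivation of each of these implications uses only Lemma~\ref{lem1}, the sign structure~\eqref{hypf} of $f$, and the defining inequality~\eqref{betaf} of $\beta_f$; the quantitative a priori bounds $m(\beta)<\inf_{\R^N}u$ and $\sup_{\R^N}u<M(\beta)$ enter only later, when a contradiction is drawn in the proof of Lemma~\ref{lem2}.

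The first step in my proof would therefore be to isolate these two implications from the proof of Lemma~\ref{lem2} and present them as stand-alone statements. For the first one, I would apply Lemma~\ref{lem1} with $\mu=\beta^2/4$ and split the interval $[m_u,M_u]$ as $([m_u,\alpha_-]\cup[\alpha_-,\alpha_+]\cup[\alpha_+,M_u])\cap[m_u,M_u]$; on the last piece one uses $f(\alpha_+)=0$ together with $4f(s)/\beta^2+s<s$ for $s>\alpha_+$, and on the middle piece one uses~\eqref{betaf}, to conclude that the maximum of $s\mapsto 4f(s)/\beta^2+s$ over $[m_u,M_u]$ must be attained in $[m_u,\alpha_-)$, thus forcing $m_u<\alpha_-$. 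The symmetric implication is obtained in the same way (or alternatively by replacing $u$ with $-u$, which solves~\eqref{fourth2} with $f(s)$ replaced by $-f(-s)$, a nonlinearity that again satisfies~\eqref{hypf} with $(\alpha_-,\alpha_+)$ replaced by $(-\alpha_+,-\alpha_-)$).

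With both implications in hand, the conclusion follows by contraposition. If $u\le\alpha_+$ in $\R^N$, then $\sup_{\R^N}u\le\alpha_+$, so by the contrapositive of the second implication $\inf_{\R^N}u\ge\alpha_-$, giving $\alpha_-\le u\le\alpha_+$. If $u\ge\alpha_-$ in $\R^N$, then $\inf_{\R^N}u\ge\alpha_-$, so by the contrapositive of the first implication $\sup_{\R^N}u\le\alpha_+$, again giving $\alpha_-\le u\le\alpha_+$. I do not foresee any substantial obstacle: the only care needed is the bookkeeping check that, in the derivation of the two implications, the bounds $m(\beta)<\inf_{\R^N}u$ and $\sup_{\R^N}u<M(\beta)$ of Lemma~\ref{lem2} are genuinely never invoked, so that the corollary is truly a free byproduct of the arguments already written out.
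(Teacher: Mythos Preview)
Your proposal is correct and matches the paper's own reasoning: the paragraph immediately preceding Corollary~\ref{coroneside} explicitly says that the two implications $\sup_{\R^N}u>\alpha_+\Rightarrow\inf_{\R^N}u<\alpha_-$ and $\inf_{\R^N}u<\alpha_-\Rightarrow\sup_{\R^N}u>\alpha_+$ were established in the proof of Lemma~\ref{lem2} ``independently of the a priori bounds~\eqref{mMbeta},'' and the corollary is stated as their contrapositive. Your bookkeeping check is accurate---those bounds enter only in the final contradiction step via~\eqref{supinfalpha}--\eqref{supinfalpha2}, not in the derivation of the implications themselves.
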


Lemma~\ref{lem2} also provides an improvement of some pointwise bounds of $u$. This improvement is valid even when $\beta=\beta_f$, since $m(\beta_f)<\alpha_-$ and $M(\beta_f)>\alpha_+$. In other words, there is $\epsilon>0$ such that, if $\beta\ge\beta_f$ and $u$ is a classical bounded solution of~\eqref{fourth2} such that~$\alpha_--\epsilon\le u\le\alpha_++\epsilon$ in $\R^N$, then $\alpha_-\le u\le\alpha_+$ in $\R^N$.

However, the conclusion of Lemma~\ref{lem2} does not hold in general when $\beta$ is smaller than~$\beta_f$. Indeed, in dimension $N=1$ with $f(s)=s-s^3$, it follows from a continuity argument combined with \cite[Theorem 5.1.1]{pt4} and \cite[Theorem 4]{vdb2} that when $0<\beta_{}\displaystyle{\mathop{\to}^<}\sqrt{8}=\beta_f$, there is a sequence of bounded solutions $u_{n}$ such that~$-1-1/n<\inf_{\R}u_{n}<-1<1<\sup_{\R}u_{n}<1+1/n$.

On the other hand, when $\beta$ becomes larger and larger, the a priori bounds~\eqref{mMbeta} become less and less restrictive, since $m(\beta)\to-\infty$ and $M(\beta)\to+\infty$ as $\beta\to+\infty$. More precisely, the following corollary holds immediately.

\begin{cor}\label{corlarge}
Let $f:\R\to\R$ be a locally Lipschitz-continuous function satisfying~\eqref{hypf}, let $\beta_f>0$ be as in~\eqref{betaf} and let $A\ge0$ be a given nonnegative real number. There is $\beta_{f,A}\ge\beta_f$ such that, if $\beta\ge\beta_{f,A}$ and $u$ is a classical bounded solution of~\eqref{fourth2} with $\|u\|_{L^{\infty}(\R^N)}\le A$, then $\alpha_-\le u\le\alpha_+$ in $\R^N$.
\end{cor}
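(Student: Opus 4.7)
The plan is to deduce Corollary~\ref{corlarge} directly from Lemma~\ref{lem2} by exploiting the unboundedness of the thresholds $m(\beta)$ and $M(\beta)$ as $\beta\to+\infty$, together with their monotonicity.

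First I would recall from Lemma~\ref{lem2} that there exists a nonincreasing map $m:[\beta_f,+\infty)\to[-\infty,\alpha_-)$ and a nondecreasing map $M:[\beta_f,+\infty)\to(\alpha_+,+\infty]$ with the property that $m(\beta)\to-\infty$ and $M(\beta)\to+\infty$ as $\beta\to+\infty$. In view of these limits, for the given $A\ge 0$ I can select $\beta_{f,A}\ge\beta_f$ large enough so that
\[
m(\beta_{f,A})<-A\quad\text{and}\quad M(\beta_{f,A})>A.
\]
By the monotonicity of $m$ and $M$, this choice then yields $m(\beta)\le m(\beta_{f,A})<-A$ and $M(\beta)\ge M(\beta_{f,A})>A$ for every $\beta\ge\beta_{f,A}$.

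Next, given any classical bounded solution $u$ of~\eqref{fourth2} with $\beta\ge\beta_{f,A}$ and $\|u\|_{L^\infty(\R^N)}\le A$, I simply observe that
\[
m(\beta)<-A\le\inf_{\R^N}u\le\sup_{\R^N}u\le A<M(\beta),
\]
so the strict inequalities~\eqref{mMbeta} of Lemma~\ref{lem2} are fulfilled. Applying Lemma~\ref{lem2} immediately produces $\alpha_-\le u\le\alpha_+$ in $\R^N$, which is the desired conclusion.

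There is no real obstacle here: the entire content of the corollary is packed into the behavior of $m$ and $M$ at $+\infty$, which was already established in Lemma~\ref{lem2}. The only minor point to watch is that the hypothesis~\eqref{mMbeta} requires strict inequalities, which is why I pick $\beta_{f,A}$ so that $m(\beta_{f,A})<-A$ and $M(\beta_{f,A})>A$ strictly, rather than with non-strict inequalities; this is harmless because the limits are infinite.
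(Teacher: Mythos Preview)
Your proof is correct and matches the paper's approach: the paper states that Corollary~\ref{corlarge} ``holds immediately'' from the fact that $m(\beta)\to-\infty$ and $M(\beta)\to+\infty$ as $\beta\to+\infty$, and your argument spells out exactly this (including the use of monotonicity to handle all $\beta\ge\beta_{f,A}$).
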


\begin{rem}{\rm At the limit when $\gamma=1/\beta^2\to0^+$, the fourth order equation $\gamma\Delta^2v-\Delta v=f(v)$ (obtained from~\eqref{fourth2} with the scaling $u(x)=v(x/\sqrt{\beta})$) converges formally to the second order equation $-\Delta v=f(v)$. For this last equation, under the assumption~\eqref{hypf}, it follows easily from the maximum principle that any bounded solution $v$ satisfies $\alpha_-\le v\le\alpha_+$ in~$\R^N$.}
\end{rem}

Moreover, when $f$ is bounded or more generally when $|f(s)|=O(|s|)$ as $s\to\pm\infty$, then it follows from the definitions~\eqref{defmbeta} and~\eqref{defMbeta} of $m(\beta)$ and $M(\beta)$ that $m(\beta)=-\infty$ and~$M(\beta)=+\infty$ for $\beta$ large enough. In other words, the $L^{\infty}$ constraint on $u$ is only qualitative for large $\beta$, and the following corollary holds.

\begin{cor}\label{corbounded}
Let $f$ be a locally Lipschitz-continuous function satisfying~\eqref{hypf} and such that $|f(s)|=O(|s|)$ as $s\to\pm\infty$, and let $\beta_f>0$ be as in~\eqref{betaf}. Then there is $\beta_{f,\infty}\ge\beta_f$ such that, for any $\beta\ge\beta_{f,\infty}$, any classical bounded solution $u$ of~\eqref{fourth2} satisfies $\alpha_-\le u\le\alpha_+$ in $\R^N$.
\end{cor}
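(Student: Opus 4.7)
The plan is to invoke Lemma~\ref{lem2} directly, once we verify that the hypothesis $|f(s)|=O(|s|)$ as $s\to\pm\infty$ forces $m(\beta)=-\infty$ and $M(\beta)=+\infty$ for all sufficiently large $\beta$. Indeed, if this holds, then any bounded $u$ automatically satisfies $m(\beta)<\inf_{\R^N}u\le\sup_{\R^N}u<M(\beta)$, so Lemma~\ref{lem2} yields $\alpha_-\le u\le\alpha_+$ in $\R^N$, and it suffices to take $\beta_{f,\infty}$ large enough to guarantee both $\beta_{f,\infty}\ge\beta_f$ and this vacuity of the defining sets.

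To establish the vacuity, I would introduce the auxiliary function
\[
h_\beta(s)\;:=\;\frac{4f(s)}{\beta^2}+s-\bigl(\alpha_-+\alpha_+-s\bigr)\;=\;\frac{4f(s)}{\beta^2}+2s-\alpha_--\alpha_+,
\]
so that $m(\beta)=\sup\{s\le\alpha_-:h_\beta(s)=0\}$ and $M(\beta)=\inf\{s\ge\alpha_+:h_\beta(s)=0\}$. The hypothesis $|f(s)|=O(|s|)$ as $s\to\pm\infty$, together with the local boundedness of $f$ (it is locally Lipschitz), gives a constant $C>0$ such that $|f(s)|\le C(1+|s|)$ for every $s\in\R$.

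For the $m(\beta)$ part, one checks that for $s\le\alpha_-$,
\[
h_\beta(s)\;\le\;\frac{4C(1+|s|)}{\beta^2}+2s-\alpha_--\alpha_+\;=\;s\Bigl(2-\frac{4C}{\beta^2}\Bigr)+\frac{4C}{\beta^2}-\alpha_--\alpha_+.
\]
As soon as $\beta^2>2C$, the coefficient of $s$ is positive, so this upper bound is maximized on $(-\infty,\alpha_-]$ at $s=\alpha_-$ and equals $\alpha_--\alpha_++4C(1-\alpha_-)/\beta^2$, which is strictly negative once $\beta^2>4C(1-\alpha_-)/(\alpha_+-\alpha_-)$. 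Hence $h_\beta<0$ on $(-\infty,\alpha_-]$ for $\beta$ sufficiently large, so the defining set for $m(\beta)$ is empty, giving $m(\beta)=-\infty$. A completely symmetric computation on $[\alpha_+,+\infty)$ shows $h_\beta>0$ there for $\beta$ large, hence $M(\beta)=+\infty$.

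There is no real obstacle here; the argument is a short continuity/sign-counting exercise combined with Lemma~\ref{lem2}. The only point requiring a little care is to control $h_\beta$ simultaneously on the unbounded rays $(-\infty,\alpha_-]$ and $[\alpha_+,+\infty)$; the linear growth hypothesis on $f$ is exactly what lets the affine term $2s$ dominate $4f(s)/\beta^2$ once $\beta$ is large, which is the reason $|f(s)|=O(|s|)$ (rather than merely bounded) is enough.
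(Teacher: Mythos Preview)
Your approach is exactly the paper's: the authors simply observe (in the paragraph preceding the corollary) that the growth condition $|f(s)|=O(|s|)$ forces $m(\beta)=-\infty$ and $M(\beta)=+\infty$ for large $\beta$ via the definitions~\eqref{defmbeta}--\eqref{defMbeta}, whence Lemma~\ref{lem2} applies with vacuous a priori bounds; you have merely supplied the details the paper omits. One minor slip worth fixing: your displayed equality $\frac{4C(1+|s|)}{\beta^2}+2s-\alpha_--\alpha_+=s\bigl(2-\tfrac{4C}{\beta^2}\bigr)+\tfrac{4C}{\beta^2}-\alpha_--\alpha_+$ tacitly assumes $|s|=-s$, i.e.\ $s\le 0$, so if $\alpha_->0$ you should treat the compact interval $[0,\alpha_-]$ separately (which is trivial, since there $h_\beta(s)\to 2s-\alpha_--\alpha_+\le\alpha_--\alpha_+<0$ uniformly as $\beta\to\infty$).
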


To complete this section, let us finally translate the previous results to the case of the fourth order Allen-Cahn equation~\eqref{fourth}, that is, $f(s)=s-s^3$. With the previous notations, one has $\alpha_{\pm}=\pm 1$, $\beta_f=\sqrt{8}$ and it is immediate to check that
$$-m(\beta)=M(\beta)=\sqrt{1+\frac{\beta^2}{2}}.$$
Therefore, the following corollary holds.

\begin{cor}\label{corbounds}
Assume that $\beta\ge\sqrt{8}$. Any classical bounded solution $u$ of~\eqref{fourth} such that $\|u\|_{L^{\infty}(\R^N)}<\sqrt{1+\beta^2/2}$ satisfies $\|u\|_{L^{\infty}(\R^N)}\le1$. In particular, any classical bounded solution $u$ of~\eqref{fourth} such that $\|u\|_{L^{\infty}(\R^N)}<\sqrt{5}$ satisfies $\|u\|_{L^{\infty}(\R^N)}\le1$. Furthermore, any classical bounded solution $u$ of~\eqref{fourth} such that $u\ge-1$ in $\R^N$ or $u\le 1$ in $\R^N$ satisfies~$\|u\|_{L^{\infty}(\R^N)}\le1$.
\end{cor}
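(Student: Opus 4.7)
The plan is to simply specialize Lemma~\ref{lem2} and Corollary~\ref{coroneside} to the nonlinearity $f(s)=s-s^{3}$. For this $f$, one checks at once that hypothesis~\eqref{hypf} holds with $\alpha_{\pm}=\pm 1$, and that the smallest real number $\beta_{f}>0$ meeting~\eqref{betaf} is $\beta_{f}=\sqrt{8}$ (this is the standard threshold recalled earlier for the one-dimensional monotone profile). So the framework of Lemma~\ref{lem2} applies for every $\beta\ge\sqrt{8}$.

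The main computational step is to identify $m(\beta)$ and $M(\beta)$ explicitly. Since $\alpha_{-}+\alpha_{+}=0$, the defining equation in~\eqref{defMbeta} reduces to
\[
\frac{4(s-s^{3})}{\beta^{2}}+s=-s,
\]
i.e.\ $2s\bigl(2s^{2}-2-\beta^{2}\bigr)/\beta^{2}=0$, whose roots are $s=0$ and $s=\pm\sqrt{1+\beta^{2}/2}$. The smallest root in $[1,+\infty)$ is $\sqrt{1+\beta^{2}/2}$, hence $M(\beta)=\sqrt{1+\beta^{2}/2}$, and the symmetric computation (or the odd symmetry of $f$) gives $m(\beta)=-\sqrt{1+\beta^{2}/2}$. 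Plugging this into Lemma~\ref{lem2}, the assumption
\[
\|u\|_{L^{\infty}(\R^{N})}<\sqrt{1+\beta^{2}/2}
\]
is precisely~\eqref{mMbeta}, so $-1\le u\le 1$ in $\R^{N}$, i.e.\ $\|u\|_{L^{\infty}(\R^{N})}\le 1$; this is the first assertion.

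For the ``in particular'' clause, I note that the map $\beta\mapsto\sqrt{1+\beta^{2}/2}$ is increasing, so for every $\beta\ge\sqrt{8}$ one has $\sqrt{1+\beta^{2}/2}\ge\sqrt{1+4}=\sqrt{5}$, and the bound $\|u\|_{L^{\infty}(\R^{N})}<\sqrt{5}$ immediately triggers the first assertion. Finally, the one-sided statement is nothing but Corollary~\ref{coroneside} applied with $\alpha_{\pm}=\pm 1$: a solution satisfying $u\ge -1$ (resp.\ $u\le 1$) on $\R^{N}$ automatically obeys $-1\le u\le 1$ on $\R^{N}$.

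There is no real obstacle here: once Lemma~\ref{lem2} and Corollary~\ref{coroneside} are in hand, everything reduces to solving the explicit cubic defining $M(\beta)$ and to the elementary monotonicity $\sqrt{1+\beta^{2}/2}\ge\sqrt{5}$ for $\beta\ge\sqrt{8}$; the only thing worth double-checking is that $\sqrt{1+\beta^{2}/2}$ is indeed the smallest root of the defining equation in $[1,+\infty)$, which is clear since the other positive candidate is $s=0<1$.
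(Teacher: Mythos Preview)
Your proof is correct and follows exactly the paper's approach: the paper also derives the corollary by specializing Lemma~\ref{lem2} and Corollary~\ref{coroneside} to $f(s)=s-s^{3}$, after computing $-m(\beta)=M(\beta)=\sqrt{1+\beta^{2}/2}$. You have in fact supplied a bit more detail in solving the defining equation for $M(\beta)$ than the paper does.
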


\begin{open}\label{openbounds}{\rm 
In dimension $N=1$, it is known that any classical bounded solution~$u$ of~\eqref{fourth} satisfies $\|u\|_{L^{\infty}(\R)}\le\sqrt{2}$ whatever $\beta>0$, see for instance \cite{pt3}. The crucial ingredient to prove this a priori bound in dimension $1$ is the first integral of energy which is not available in higher dimension. In particular, from Corollary~$\ref{corbounds}$, any classical bounded solution $u$ of~\eqref{fourth} with $N=1$ satisfies $\|u\|_{L^{\infty}(\R)}\le1$ if $\beta\ge\sqrt{8}$. We believe that these a priori estimates hold true in higher dimensions $N\ge 2$, at least when uniformity conditions~\eqref{uniform} are assumed, however we leave them as an open problem.}
\end{open}

%%%%%%%%%%%%%%%%%%%%%%%%%%%%%%%%%%%%%%%%%%%%%%%
%%%%%%%%%%%%%%%%%%%%%%%%%%%%%%%%%%%%%%%%%%%%%%%

\section{Proof of the rigidity results}\label{sec4}

This section is devoted to the proof of the main results, Theorems~\ref{th1} and~\ref{th2}. As in Section~\ref{sec3}, we can actually consider more general fourth order equations of the type~\eqref{fourth2} with a locally Lipschitz-continuous function $f:\R\to\R$ satisfying~\eqref{hypf} and being decreasing in neighborhoods of $\alpha_{\pm}$ (by decreasing, we mean strictly decreasing, see in particular the end of the proof of Lemma~\ref{lem3}, where the strict monotonicity is used). Namely, the main two results of this section are the following theorems.

\begin{thm}\label{th3}
Let $f:[\alpha_-,\alpha_+]\to\R$ be a Lipschitz-continuous function such that $f(\alpha_{\pm})=0$ and $f$ is decreasing in $[\alpha_-,\alpha_-+\delta]$ and in $[\alpha_+-\delta,\alpha_+]$ for some $\delta>0$. Let $\omega>0$ be such that
\be\label{hypomega}
\frac{f(s)-f(s')}{s-s'}+\omega\ge0\ \hbox{ for all }\alpha_-\le s\neq s'\le\alpha_+.
\ee
If $\beta\ge2\sqrt{\omega}$ and $u$ is a classical bounded solution of~\eqref{fourth2} satisfying $\alpha_-\le u\le\alpha_+$ in $\R^N$ and
\be\label{uniformalpha}
\lim_{x_N\to\pm\infty} u(x_1,\dots,x_{N})=\alpha_{\pm}\quad \hbox{uniformly in $(x_1,\dots,x_{N-1})\in\R^{N-1}$,}
\end{equation}
then $u$ only depends on the variable $x_N$ and is increasing in $x_N$.
\end{thm}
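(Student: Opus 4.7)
The plan is to reduce the fourth-order equation to a coupled system of two second-order equations and then run a sliding method on the pair. Since $\beta \ge 2\sqrt{\omega}$, one can choose $\lambda \in (0, \beta)$ with $\lambda(\beta - \lambda) = \omega$, and, as in the proof of Lemma~\ref{lem1}, setting $v := \Delta u - \lambda u$ turns equation~\eqref{fourth2} into the system
\[
\Delta u - \lambda u = v, \qquad \Delta v - (\beta - \lambda) v = g(u),
\]
where $g(s) := f(s) + \omega s$ is nondecreasing on $[\alpha_-, \alpha_+]$ by~\eqref{hypomega}. Standard interior elliptic estimates combined with~\eqref{uniformalpha} force $\Delta u(x) \to 0$ uniformly as $x_N \to \pm\infty$, and hence $v(x_1,\dots,x_N) \to -\lambda \alpha_\pm$ uniformly in $(x_1,\dots,x_{N-1}) \in \R^{N-1}$.

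For a unit vector $e$ with $e \cdot e_N > 0$ and $\tau \ge 0$, I set $u^\tau(x) := u(x + \tau e)$, $v^\tau(x) := v(x + \tau e)$, $U^\tau := u^\tau - u$, $V^\tau := v^\tau - v$, which satisfy
\[
\Delta U^\tau - \lambda U^\tau = V^\tau, \qquad \Delta V^\tau - (\beta - \lambda) V^\tau = g(u^\tau) - g(u),
\]
with the right-hand side of the second equation of the same sign as $U^\tau$ by monotonicity of $g$. The goal is to show that $U^\tau \ge 0$ and $V^\tau \le 0$ throughout $\R^N$ for every $\tau > 0$. For initialization, the uniform asymptotics of $u$ and $v$ let us pick $\tau_0$ so large that both strict inequalities hold whenever $\tau \ge \tau_0$: on any fixed slab $\{|x_N| \le R\}$ the shifts $u^\tau, v^\tau$ lie close to $\alpha_+$ and $-\lambda\alpha_+$ respectively, while outside the slab the inequalities follow directly from~\eqref{uniformalpha} and its consequence on $v$. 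Setting $\tau^* := \inf\{\tau_1 > 0 : U^\tau \ge 0 \text{ and } V^\tau \le 0 \text{ in } \R^N \text{ for all } \tau \ge \tau_1\}$, I would argue by contradiction that $\tau^* = 0$: if $\tau^* > 0$, the weak half-space comparison principles for the system (Lemmas~\ref{lem3}--\ref{lem4} below), together with the uniform limits at $x_N = \pm\infty$, upgrade the inequalities at $\tau = \tau^*$ to strict ones, and a continuation argument then survives a small perturbation of $\tau$ below $\tau^*$, contradicting the definition of $\tau^*$.

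Once $u(\cdot + \tau e) \ge u$ is established for every $\tau > 0$ and every $e$ with $e \cdot e_N > 0$, the one-dimensional conclusion follows classically: for $e' \perp e_N$ and $\varepsilon \in (0,1)$, the vector $e_\varepsilon := \varepsilon e_N + \sqrt{1-\varepsilon^2}\, e'$ is admissible, and letting $\varepsilon \downarrow 0$ gives $u(\cdot + \tau e') \ge u$; applying the same inequality with $-e'$ forces $u$ to be invariant under every translation perpendicular to $e_N$, so $u$ depends only on $x_N$. Strict monotonicity in $x_N$ is then obtained from the case $e = e_N$: if $u(x_0 + \tau_0 e_N) = u(x_0)$ for some $\tau_0 > 0$, the strong maximum principle applied to the pair $(U^{\tau_0}, V^{\tau_0})$ forces $u^{\tau_0} \equiv u$, so $u$ is $\tau_0$-periodic in $x_N$, contradicting the distinct limits $\alpha_- \ne \alpha_+$.

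The hard part is the weak half-space comparison principles themselves (Lemmas~\ref{lem3}--\ref{lem4}). A fourth-order elliptic operator does not satisfy a standard maximum principle in unbounded domains, so the sliding method cannot be run on $u$ alone. The decisive structural observation is that the hypothesis $\beta \ge 2\sqrt{\omega}$ makes the factorization $\Delta^2 - \beta\Delta + \omega = (\Delta - \lambda)(\Delta - (\beta-\lambda))$ real with both factors coercive ($\lambda, \beta - \lambda > 0$), and the monotonicity of $g$, together with the strict decrease of $f$ in $[\alpha_-, \alpha_- + \delta] \cup [\alpha_+ - \delta, \alpha_+]$, imparts a usable cooperative structure to the linearized system in $(U^\tau, -V^\tau)$ — this is what allows the Berestycki--Caffarelli--Nirenberg sliding technology to extend to the present fourth-order setting.
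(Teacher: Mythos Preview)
Your overall strategy matches the paper's: factor the operator via $v=\Delta u-\lambda u$ with $\lambda(\beta-\lambda)=\omega$, slide the pair $(u,v)$, invoke the half-space comparison principles (Lemmas~\ref{lem3}--\ref{lem4}), and conclude one-dimensionality and monotonicity. The continuation step and the final deduction of one-dimensionality are essentially as in the paper.

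There is, however, a genuine gap in your initialization. You assert that for $\tau$ large the inequalities $U^\tau\ge 0$ and $V^\tau\le 0$ hold ``directly'': on a slab $\{|x_N|\le R\}$ because $u^\tau$ is close to $\alpha_+$, and outside the slab ``directly from~\eqref{uniformalpha}''. Neither part works. On the slab, $u^\tau\ge\alpha_+-\varepsilon$ does not dominate $u$, which may itself take values arbitrarily close to $\alpha_+$ there. Outside the slab the situation is worse: in $\{x_N>R\}$ both $u$ and $u^\tau$ lie near $\alpha_+$, so the sign of $U^\tau$ is undetermined; in $\{x_N<-R\}$, whenever $x_N+\tau e\cdot e_N<-R$ both functions lie near $\alpha_-$, and again no comparison follows. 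The uniform limits simply do not separate $u$ from $u^\tau$ in these regions.

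The paper's initialization is different and is the point you are missing: it does \emph{not} attempt a direct pointwise comparison. Instead, for $\tau\ge 2A$ it verifies the desired inequalities only on a single hyperplane $\{x_N=A\}$, where the shifted function takes values $\le\alpha_-+\delta$ while $u\ge\alpha_+-\delta$ (and similarly for the $v$-component, using that $\Delta u$ is uniformly small there). Then Lemmas~\ref{lem3} and~\ref{lem4} are invoked already at this stage to propagate the inequalities from that hyperplane into the half-spaces $\{x_N\ge A\}$ and $\{x_N\le A\}$. In other words, the half-space comparison principles are indispensable for the initialization itself, not only for the continuation argument. Once you repair this step along these lines, the rest of your outline goes through.
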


Notice that the condition~\eqref{hypomega} is equivalent to $f'(s)+\omega\ge0$ for all $s\in[\alpha_-,\alpha_+]$ as soon as $f$ is differentiable in the interval $[\alpha_-,\alpha_+]$.

\begin{rem}\label{remomega}{\rm 
It is immediate to see that if $f(\alpha_{\pm})=0$, $f\not\equiv0$ in $[\alpha_-,\alpha_+]$ and $\omega$ satisfies~\eqref{hypomega}, then
$$2\sqrt{\omega}\ge\beta_f,$$
where $\beta_f>0$ is defined in~\eqref{betaf}. Therefore, the parameters $\beta$ considered in Theorem~\ref{th3} are always larger than or equal to $\beta_f$.}
\end{rem}

\begin{thm}\label{th4}
Let $f:\R\to\R$ be a locally Lipschitz-continuous function satisfying~\eqref{hypf} and such that $f$ is decreasing in $[\alpha_-,\alpha_-+\delta]$ and in $[\alpha_+-\delta,\alpha_+]$ for some $\delta>0$. Let $\omega>0$ satisfy~\eqref{hypomega}. If $\beta\ge2\sqrt{\omega}$ and $u$ is a classical bounded solution of~\eqref{fourth2} satisfying
\begin{equation}\label{uniformalpha2}\left\{\begin{array}{ll}
\displaystyle\!\!\!\lim_{x_N\to\pm\infty}\!u(x_1,\dots,x_N)\!=\!\alpha_-\,\hbox{uniformly in }(x_1,\dots,x_{N-1})\!\in\!\R^{N-1} & \!\!\!\!\!\displaystyle\hbox{and }\sup_{\R^N}u\!<\!\alpha_+,\\
\hbox{or} & \\
\displaystyle\!\!\!\lim_{x_N\to\pm\infty}\!u(x_1, \dots, x_{N})\!=\!\alpha_+\,\hbox{uniformly in }(x_1,\dots,x_{N-1})\!\in\!\R^{N-1} & \!\!\!\!\!\displaystyle\hbox{and }\inf_{\R^N}u\!>\!\alpha_-,\end{array}\right.
\end{equation}
and if there is a one-dimensional solution $\phi:\R\to[\alpha_-,\alpha_+]$ of~\eqref{fourth2} such that $\phi(\pm\infty)=\alpha_{\pm}$, then $u$ is constant. 
\end{thm}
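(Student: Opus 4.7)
The plan is to adapt the sliding method to the fourth-order setting. I decompose the equation as a second-order elliptic system whose nonlinearity, thanks to the choice $\beta\ge 2\sqrt\omega$, is monotone; I then slide translates of the 1D connection $\phi$ against $u$ and push the sliding parameter to $-\infty$.

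\emph{Reduction.} The two alternatives in~\eqref{uniformalpha2} are symmetric (swap $\alpha_-\leftrightarrow\alpha_+$), so I focus on the first: $u\to\alpha_-$ uniformly as $x_N\to\pm\infty$ and $M_u:=\sup_{\R^N}u<\alpha_+$. In particular $u\le\alpha_+$, so Corollary~\ref{coroneside} (applicable because $\beta\ge 2\sqrt\omega\ge\beta_f$ by Remark~\ref{remomega}) yields $\alpha_-\le u\le\alpha_+$ on $\R^N$. The target is $u\equiv\alpha_-$.

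\emph{Decomposition into a monotone system.} Since $\beta\ge 2\sqrt\omega$, pick $\lambda\in(0,\beta)$ with $\lambda(\beta-\lambda)=\omega$ and set $v:=\Delta u-\lambda u$. Then~\eqref{fourth2} is equivalent to the system
\begin{equation*}
\Delta u-\lambda u=v,\qquad\Delta v-(\beta-\lambda)v=g(u),\qquad g(s):=f(s)+\omega s,
\end{equation*}
and~\eqref{hypomega} makes $g$ nondecreasing on $[\alpha_-,\alpha_+]$. Applying the same decomposition to $\phi$, the pair $(\phi,\psi):=(\phi,\phi''-\lambda\phi)$ is a 1D solution of the system, and the translates $\phi_t(x):=\phi(x_N+t)$, $\psi_t(x):=\psi(x_N+t)$ solve it in $\R^N$.

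\emph{Sliding.} Define
\begin{equation*}
T^*:=\inf\bigl\{t\in\R:\,u\le\phi_s\text{ in }\R^N\text{ for every }s\ge t\bigr\}.
\end{equation*}
To see $T^*<+\infty$: because $M_u<\alpha_+=\phi(+\infty)$, there is $R$ with $\phi(y)>M_u$ for $y\ge R$, so $\phi_t>u$ on any fixed slab of $x_N$-values once $t$ is large enough; on the complementary region $\{x_N\ll 0\}$, the uniformity of $u\to\alpha_-$ and the bound $\phi_t\ge\alpha_-$ close the comparison. The coupled inequality on $(v,\psi_t)$ is then forced by the system via interior elliptic estimates. To lower $T^*$, I would invoke a weak comparison principle for the system in half-spaces of the form $\{x_N\ge\text{const}\}$ --- precisely the role of the forthcoming Lemmas~\ref{lem3} and~\ref{lem4} --- which exploits both the monotonicity of $g$ and the condition $\beta\ge 2\sqrt\omega$ to rule out that $\phi_{T^*}-u$ touches zero without being identically zero in a tail. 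Standard tangency/continuity arguments, together with the uniform limits at infinity, then show $T^*=-\infty$. Letting $t\to-\infty$ in $u\le\phi_t$ and using $\phi_t\to\alpha_-$ pointwise gives $u\le\alpha_-$, hence $u\equiv\alpha_-$.

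\emph{Main obstacle.} The central difficulty is the lack of a direct maximum principle for the biharmonic operator and the non-cooperative nature of the system above (the second equation's right-hand side is $g(u)$, not involving $v$). The decomposition choice $\lambda(\beta-\lambda)=\omega$, available only for $\beta\ge 2\sqrt\omega$, is precisely what restores monotonicity of $g$ and underpins the half-space weak comparison principles. A subtler point is that the comparison must be coupled between $u$ and $v$: sliding only in the $u$-component would not interact correctly with the second equation, so Lemmas~\ref{lem3}--\ref{lem4} must be crafted to compare $(u,v)$ and $(\phi_t,\psi_t)$ simultaneously, and the strict monotonicity of $f$ near $\alpha_\pm$ in the hypothesis is what prevents degenerate contact scenarios at the critical parameter $T^*$.
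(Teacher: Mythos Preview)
Your overall strategy---sliding translates of the one-dimensional kink $\phi$ against $u$ using the decomposition $\lambda(\beta-\lambda)=\omega$---is exactly the paper's, and your reduction to one alternative and the use of Corollary~\ref{coroneside} are correct. However, two points in your sketch are genuine gaps rather than routine details.

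\textbf{Starting the sliding.} Your argument that $T^*<+\infty$ breaks down in the tail $\{x_N\ll 0\}$. There both $u$ and $\phi_t$ approach $\alpha_-$; knowing only that $u$ is close to $\alpha_-$ and that $\phi_t\ge\alpha_-$ gives $u\le\phi_t+\varepsilon$, not $u\le\phi_t$. In the paper this region is not handled by a naive limit but by invoking the half-space comparison Lemma~\ref{lem4} (with $z_1=u$ close to $\alpha_-$ there) \emph{already at the initial step}, and Lemma~\ref{lem3} for the upper tail. To feed those lemmas you must verify the \emph{coupled} boundary conditions $u\le\phi_t$ and $\Delta u-\lambda u\ge\phi_t''-\lambda\phi_t$ on a fixed hyperplane; the paper obtains these by an explicit computation (the analogues of~\eqref{ineq1}--\eqref{ineq2}) using the choice of $A$ in~\eqref{choiceA2}. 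Your sentence ``the coupled inequality on $(v,\psi_t)$ is then forced by the system via interior elliptic estimates'' is not a substitute: interior estimates give regularity, not sign information.

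\textbf{Tracking both components.} Your $T^*$ is defined only through $u\le\phi_s$, whereas the paper's $\tau^*$ in~\eqref{deftau*2} records \emph{both} $\phi_\tau\le u$ and $\phi''_\tau-\lambda\phi_\tau\ge\Delta u-\lambda u$. This matters because Lemmas~\ref{lem3}--\ref{lem4} require both inequalities on the boundary of the half-space, and the ``strict positivity on a slab then extend by the lemmas'' step needs two separate strict-sign claims (the analogues of~\eqref{claim1bis} and~\eqref{claim2bis}), each proved by its own strong-maximum-principle/compactness argument. Note also a structural difference: the paper does \emph{not} drive the sliding parameter to $-\infty$. It fixes an arbitrary shift $\xi$, proves the finite statement $\tau^*=0$ on a slab $[-A,A]$ that depends on $\xi$, and only then lets $\xi\to+\infty$ to conclude $u\equiv\alpha_+$. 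This two-parameter scheme avoids the difficulty that, in your direct approach, $\inf_{\R^N}(\phi_{T^*}-u)=0$ is always attained as $x_N\to-\infty$, so a naive continuity argument cannot lower $T^*$.
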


The condition on the existence of a one-dimensional front $\phi$ connecting $\alpha_{\pm}$ is reasonable, in the sense that it has a natural counterpart for second order equations. Indeed, for the equation~$-\Delta u=f(u)$ with a Lipschitz-continuous function $f:[\alpha_-,\alpha_+]\to\R$ satisfying
\be\label{bistable}
f(\alpha_{\pm})=0,\ \ f'(\alpha_{\pm})<0,\ \ f<0\hbox{ on }(\alpha_-,\theta)\ \hbox{ and }\  f>0\hbox{ on }(\theta,\alpha_+)
\ee
for some $\theta\in(\alpha_-,\alpha_+)$, then the existence of a one-dimensional front $\phi$ connecting $\alpha_{\pm}$ is a necessary and sufficient condition for the conclusion of Theorem~\ref{th4} to hold and it is equivalent to $\int_{\alpha_-}^{\alpha_+}f(s)ds=0$, see e.g.~\cite{aw,fm}. Otherwise, there are non-constant pulse like solutions $u$ satisfying~\eqref{uniformalpha2}. For the fourth order equation~\eqref{fourth2}, a necessary condition for the existence of a one-dimensional front $\phi:\R\to[\alpha_-,\alpha_+]$ such that $\phi(\pm\infty)=\alpha_{\pm}$ is that $\int_{\alpha_-}^{\alpha_+}f(s)ds=0$, after integrating the equation of $\phi$ over $\R$ against $\phi'$ and using the fact that the derivatives of $\phi$ (at least up to the fourth order) converge to $0$ at $\pm\infty$ by the interior elliptic estimates~\cite{bjs,dn}.

\begin{rem}\label{remoneside}{\rm In Theorem~$\ref{th4}$, the assumption~\eqref{uniformalpha2} and Corollary~$\ref{coroneside}$ imply that the solution $u$ ranges automatically in the interval $[\alpha_-,\alpha_+]$. In other words, Theorem~$\ref{th4}$ could have been stated as Theorem~$\ref{th3}$, with a function $f$ defined in $[\alpha_-,\alpha_+]$ and a solution $u$ ranging a priori in the interval $[\alpha_-,\alpha_+]$.}
\end{rem}

Before doing the proof of Theorems~\ref{th3} and~\ref{th4}, let us state some immediate corollaries by combining them with Remarks~\ref{remomega} and~\ref{remoneside} and the results of Section~\ref{sec3} (namely, Lemma~\ref{lem2} and Corollaries~\ref{coroneside},~\ref{corlarge},~\ref{corbounded}).

\begin{cor}\label{cor43}
Let $f:\R\to\R$ be a locally Lipschitz-continuous function satisfying~\eqref{hypf} and being decreasing in $[\alpha_-,\alpha_-+\delta]$ and in $[\alpha_+-\delta,\alpha_+]$ for some $\delta>0$. Let $\omega>0$ satisfy~\eqref{hypomega}.
\begin{enumerate}
\item If $\beta\ge2\sqrt{\omega}$ and $u$ is a classical bounded solution of~\eqref{fourth2} satisfying~\eqref{uniformalpha} and such that either $u\le\alpha_+$ in $\R^N$ or $u\ge\alpha_-$ in $\R^N$, then $u$ only depends on the variable~$x_N$ and is increasing in $x_N$.
\item Let $m(\beta)$ and $M(\beta)$ be as in Lemma~$\ref{lem2}$. If $\beta\ge2\sqrt{\omega}$ and $u$ is a classical bounded solution of~\eqref{fourth2} satisfying $m(\beta)<\inf_{\R^N}u\le\sup_{\R^N}u<M(\beta)$ and~\eqref{uniformalpha}, then $u$ only depends on the variable $x_N$ and is increasing in $x_N$.
\item Let $A\ge0$ be given. There is $\beta_{f,A}\ge2\sqrt{\omega}$ such that, if $\beta\ge\beta_{f,A}$ and $u$ is a classical bounded solution of~\eqref{fourth2} satisfying $\|u\|_{L^{\infty}(\R^N)}\le A$ and~\eqref{uniformalpha}, then $u$ only depends on the variable $x_N$ and is increasing in $x_N$.
\item If $|f(s)|=O(|s|)$ as $s\to\pm\infty$, then there is $\beta_{f,\infty}\ge2\sqrt{\omega}$ such that, if $\beta\ge\beta_{f,\infty}$ and $u$ is a classical bounded solution of~\eqref{fourth2} satisfying~\eqref{uniformalpha}, then $u$ only depends on the variable $x_N$ and is increasing in $x_N$.
\item If $\beta\ge2\sqrt{\omega}$, if $u$ is a classical bounded solution of~\eqref{fourth2} satisfying~\eqref{uniformalpha2} and if there is a one-dimensional solution $\phi:\R\to[\alpha_-,\alpha_+]$ of~\eqref{fourth2} such that $\phi(\pm\infty)=\alpha_{\pm}$, then $u$ is constant.
\end{enumerate}
\end{cor}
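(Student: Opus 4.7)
The plan is straightforward: each part of the corollary is obtained by first deducing the a priori two-sided bound $\alpha_-\le u\le\alpha_+$ on $\R^N$ from the assumptions via the tools of Section~\ref{sec3}, and then invoking either Theorem~\ref{th3} (for parts (1)--(4), which concern the limits~\eqref{uniformalpha} and yield one-dimensional symmetry and monotonicity) or Theorem~\ref{th4} (for part~(5), which concerns~\eqref{uniformalpha2} and yields constancy). Throughout, Remark~\ref{remomega} provides $2\sqrt{\omega}\ge\beta_f$, so the hypotheses $\beta\ge2\sqrt{\omega}$ of Theorems~\ref{th3} and~\ref{th4} already imply $\beta\ge\beta_f$, making the results of Section~\ref{sec3} applicable.

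For part~(1), the one-sided bound $u\le\alpha_+$ (or $u\ge\alpha_-$) in $\R^N$ together with $\beta\ge\beta_f$ puts us in the setting of Corollary~\ref{coroneside}, which upgrades the one-sided bound to the two-sided bound $\alpha_-\le u\le\alpha_+$ on $\R^N$; Theorem~\ref{th3} then yields the conclusion. For part~(2), the a priori strict inequalities $m(\beta)<\inf_{\R^N}u\le\sup_{\R^N}u<M(\beta)$ are precisely the hypotheses~\eqref{mMbeta} of Lemma~\ref{lem2}, which again gives $\alpha_-\le u\le\alpha_+$ on $\R^N$, after which Theorem~\ref{th3} applies.

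For parts~(3) and~(4), we use Corollary~\ref{corlarge} and Corollary~\ref{corbounded}, respectively, to produce thresholds beyond which any classical bounded solution automatically satisfies $\alpha_-\le u\le\alpha_+$ on $\R^N$. Concretely, in part~(3) we take $\beta_{f,A}$ given by Corollary~\ref{corlarge} (replacing it by $\max(\beta_{f,A},2\sqrt{\omega})$ if necessary, which is still denoted $\beta_{f,A}$) so that every $\beta\ge\beta_{f,A}$ simultaneously enforces the two-sided bound via Corollary~\ref{corlarge} and satisfies $\beta\ge2\sqrt{\omega}$ so that Theorem~\ref{th3} applies. Part~(4) is identical, with $\beta_{f,\infty}$ supplied by Corollary~\ref{corbounded} in place of $\beta_{f,A}$, using the growth hypothesis $|f(s)|=O(|s|)$.

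Finally, for part~(5), the hypothesis~\eqref{uniformalpha2} already includes a one-sided bound ($\sup_{\R^N}u<\alpha_+$ or $\inf_{\R^N}u>\alpha_-$), so Corollary~\ref{coroneside}, applicable since $\beta\ge2\sqrt{\omega}\ge\beta_f$, gives $\alpha_-\le u\le\alpha_+$ on $\R^N$---this is precisely the content of Remark~\ref{remoneside}. The remaining assumptions of Theorem~\ref{th4} are then all in place, and its conclusion that $u$ is constant follows. No step in this aggregation is delicate, since all the real work has already been done in Theorems~\ref{th3} and~\ref{th4} and in Section~\ref{sec3}; the only point requiring any care is the bookkeeping in parts~(3) and~(4), namely enlarging the threshold $\beta_{f,A}$ (resp. $\beta_{f,\infty}$) above $2\sqrt{\omega}$ so that both Corollary~\ref{corlarge} (resp. Corollary~\ref{corbounded}) and Theorem~\ref{th3} apply for the same range of $\beta$.
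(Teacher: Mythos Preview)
Your proof is correct and follows exactly the approach indicated in the paper, which simply states that the corollary follows by combining Theorems~\ref{th3} and~\ref{th4} with Remarks~\ref{remomega} and~\ref{remoneside} and the results of Section~\ref{sec3} (Lemma~\ref{lem2} and Corollaries~\ref{coroneside},~\ref{corlarge},~\ref{corbounded}). You have spelled out each case precisely as intended, including the minor bookkeeping of enlarging the thresholds in parts~(3) and~(4) to at least $2\sqrt{\omega}$.
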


In the particular case of the fourth order Allen-Cahn equation~\eqref{fourth} with $f(s)=s-s^3$, one has $\alpha_{\pm}=\pm1$ and the smallest $\omega>0$ satisfying~\eqref{hypomega} is equal to $\omega=2$. Furthermore, the existence of one-dimensional kinks $\phi:\R\to[-1,1]$ solving~\eqref{fourth} with $\phi(\pm\infty)=\pm1$ is guaranteed when $\beta\ge\sqrt{8}$, see e.g.~\cite{pt4,vdb2}. Therefore, the following corollary holds, from which Theorems~\ref{th1} and~\ref{th2} follow.

\begin{cor}\label{cor44}\begin{enumerate}
\item If $\beta\ge\sqrt{8}$ and $u$ is a classical bounded solution of~\eqref{fourth} satisfying $\|u\|_{L^{\infty}(\R^N)}<\sqrt{1+\beta^2/2}$ and~\eqref{uniform}, then $u$ only depends on the variable $x_N$ and is increasing in $x_N$. The same conclusion holds if, instead of $\|u\|_{L^{\infty}(\R^N)}<\sqrt{1+\beta^2/2}$, $u$ is assumed to satisfy either $u\le1$ in $\R^N$ or $u\ge-1$ in $\R^N$. Furthermore, for any $A\ge0$, there is $\tilde{\beta}_A\ge\sqrt{8}$ such that, if $\beta\ge\tilde{\beta}_A$ and $u$ is a classical bounded solution of~\eqref{fourth} satisfying $\|u\|_{L^{\infty}(\R^N)}\le A$ and~\eqref{uniform}, then $u$ only depends on the variable~$x_N$ and is increasing in $x_N$.\par
\item If $\beta\ge\sqrt{8}$ and $u$ is a classical bounded solution of~\eqref{fourth} satisfying~\eqref{uniform2}, then $u$ is constant.
\end{enumerate}
\end{cor}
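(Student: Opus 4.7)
My plan is to specialize Theorems~\ref{th3} and~\ref{th4} to the Allen-Cahn nonlinearity $f(s)=s-s^3$ and read off the constants. With $\alpha_\pm=\pm1$, the function $f$ is smooth, $f(\pm 1)=0$, and $f'(\pm 1)=-2<0$, so $f$ is strictly decreasing on $[-1,-1+\delta]$ and on $[1-\delta,1]$ for sufficiently small $\delta>0$. For the one-sided Lipschitz constant in \eqref{hypomega}, I would compute
\[
\frac{f(s)-f(s')}{s-s'}=1-(s^2+ss'+s'^2),
\]
whose minimum on $[-1,1]^2$ is $-2$, attained at the corners $(\pm 1,\pm 1)$. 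Thus the smallest admissible $\omega$ is exactly $\omega=2$, giving $2\sqrt{\omega}=\sqrt{8}=\beta_f$, which matches the assumption $\beta\ge\sqrt{8}$ throughout the corollary.

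For part (1), the goal is to reduce each of the three hypotheses to the pointwise bound $-1\le u\le 1$, so that Theorem~\ref{th3} applies directly with the uniform-limit assumption~\eqref{uniform}. In the first subcase, $\|u\|_{L^\infty(\R^N)}<\sqrt{1+\beta^2/2}$ is precisely the quantitative bound $m(\beta)<\inf u\le\sup u<M(\beta)$ for the Allen-Cahn equation (since $-m(\beta)=M(\beta)=\sqrt{1+\beta^2/2}$), so Corollary~\ref{corbounds} (or equivalently Lemma~\ref{lem2}) gives $\|u\|_{L^\infty(\R^N)}\le 1$. In the second subcase, the one-sided bound $u\le 1$ or $u\ge -1$ is handled by Corollary~\ref{coroneside}, which forces $-1\le u\le 1$. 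In the third subcase, I would invoke Corollary~\ref{corlarge} to obtain $\beta_{f,A}\ge\sqrt{8}$ such that $\beta\ge\beta_{f,A}$ and $\|u\|_{L^\infty(\R^N)}\le A$ together imply $-1\le u\le 1$; setting $\tilde\beta_A:=\max(\beta_{f,A},\sqrt{8})$ reduces again to Theorem~\ref{th3}. In each case, once $u$ is confined to $[-1,1]$, the monotone one-dimensional symmetry is immediate from Theorem~\ref{th3}.

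For part (2), the hypothesis~\eqref{uniform2} is exactly~\eqref{uniformalpha2} with $\alpha_\pm=\pm 1$. Moreover the strict one-sided bound in~\eqref{uniform2} (either $\sup u<1$ or $\inf u>-1$) together with Corollary~\ref{coroneside} automatically yields $-1\le u\le 1$ (as already noted in Remark~\ref{remoneside}). To close the argument via Theorem~\ref{th4}, I need to exhibit a one-dimensional front $\phi:\R\to[-1,1]$ of~\eqref{fourth} with $\phi(\pm\infty)=\pm 1$; existence (and even monotonicity) of such a kink for $\beta\ge\sqrt{8}$ is the classical result recalled in Section~\ref{sec1d}, proved in~\cite{pt1,pt2,vdb2} by shooting and minimization of $J_\beta$. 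Invoking this existence result and Theorem~\ref{th4} gives that $u$ is constant, and the only bounded constant solutions of~\eqref{fourth} satisfying~\eqref{uniform2} are $u\equiv 1$ or $u\equiv -1$.

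The main obstacle is essentially the verification that the constant $\omega=2$ is tight, so that $2\sqrt{\omega}$ matches the threshold $\sqrt{8}$; the rest is bookkeeping and citation of already-proved statements. Everything else (the reductions to $[-1,1]$, and the appeal to the one-dimensional kink) simply plugs the Allen-Cahn specific data into the general framework built in Sections~\ref{sec3} and~\ref{sec4}.
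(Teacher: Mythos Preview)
Your proposal is correct and follows essentially the same approach as the paper: specialize $f(s)=s-s^3$, verify $\omega=2$ so that $2\sqrt{\omega}=\sqrt{8}$, reduce each hypothesis to $-1\le u\le 1$ via the a~priori bounds of Section~\ref{sec3}, and then invoke Theorems~\ref{th3} and~\ref{th4} (the latter together with the known existence of the one-dimensional kink for $\beta\ge\sqrt{8}$). The only cosmetic point is that the $\max$ in your definition of $\tilde\beta_A$ is redundant, since Corollary~\ref{corlarge} already gives $\beta_{f,A}\ge\beta_f=\sqrt{8}$.
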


It now only remains to prove Theorems~\ref{th3} and~\ref{th4}. A key-point is the following result, which can be viewed as a weak maximum principle in half-spaces for problem~\eqref{fourth2} when~$\beta$ is large enough. We denote $\R^N_+=\R^{N-1}\times[0,+\infty)$.

\begin{lem}\label{lem3}
Let $f:[\alpha_-,\alpha_+]\to\R$ be a Lipschitz-continuous function such that $f(\alpha_+)=0$ and $f$ is decreasing in $[\alpha_+-\delta,\alpha_+]$ for some $\delta>0$. Assume $\omega>0$ satisfies~\eqref{hypomega}, $\beta\ge2\sqrt{\omega}$ and $\lambda>0$ is any of the roots of the equation $\lambda^2-\beta\lambda+\omega=0$. If $z_1$ and $z_2$ are two classical bounded solutions of~\eqref{fourth2} such that $\alpha_-\le z_1,z_2\le\alpha_+$ in $\R^N$ and 
\be\label{hypz12}\left\{\begin{array}{l}
z_2\ge\alpha_+-\delta\hbox{ in }\R^N_+,\vspace{3pt}\\
\displaystyle\lim_{x_N\to+\infty}z_1(x_1,\dots,x_N)=\lim_{x_N\to+\infty}z_2(x_1,\dots,x_N)=\alpha_+\vspace{-3pt}\\
\qquad\qquad\qquad\qquad\qquad\qquad\qquad\qquad\hbox{ uniformly in }(x_1,\dots,x_{N-1})\in\R^{N-1},\vspace{3pt}\\
z_1\le z_2\hbox{ and }\Delta z_1-\lambda\,z_1\ge\Delta z_2-\lambda\,z_2\hbox{ on }\partial\R^N_+=\R^{N-1}\times\{0\},\end{array}\right.
\ee
then $z_1\le z_2$ and $\Delta z_1-\lambda\,z_1\ge\Delta z_2-\lambda\,z_2$ in $\R^N_+$.
\end{lem}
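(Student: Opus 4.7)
My plan is to reduce the fourth-order problem to an equivalent coupled system of two second-order equations and then run a two-stage tangential-sliding argument on it. First I would set $v_i := \Delta z_i - \lambda z_i$ for $i=1,2$ and, as in the proof of Lemma~\ref{lem1}, use the identity $\lambda(\beta-\lambda) = \omega$ to obtain $\Delta v_i - (\beta-\lambda) v_i = f(z_i) + \omega z_i$. Setting $w_1 := z_2 - z_1$, $w_2 := v_1 - v_2$ and defining the bounded measurable coefficient $c(x) := [f(z_1(x)) - f(z_2(x))]/[z_1(x) - z_2(x)]$ where $z_1(x) \neq z_2(x)$ (extended by zero elsewhere), the pair $(w_1, w_2)$ solves on $\R^N$ the coupled system
\begin{equation*}
\Delta w_1 - \lambda w_1 = -w_2, \qquad \Delta w_2 - (\beta-\lambda) w_2 = -(c+\omega)\, w_1,
\end{equation*}
with $c+\omega \geq 0$ by \eqref{hypomega}, and with $\lambda>0$ and $\beta-\lambda>0$ being the two positive roots of $\lambda^2 - \beta\lambda + \omega = 0$ (real since $\beta \geq 2\sqrt{\omega}$). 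The boundary hypotheses from \eqref{hypz12} translate directly into $w_1 \geq 0$ and $w_2 \geq 0$ on $\partial\R^N_+$, while the uniform convergence $z_i \to \alpha_+$ as $x_N \to +\infty$ combined with standard interior $C^{4,\alpha}$ estimates \cite{bjs,dn} forces both $w_1$ and $w_2$ to tend to $0$ uniformly as $x_N \to +\infty$. Once one has shown $w_1 \geq 0$ in $\R^N_+$, the second equation yields $\Delta w_2 - (\beta-\lambda) w_2 \leq 0$ there, and the standard weak maximum principle for the operator $\Delta - (\beta - \lambda)$ in a half-space (applied to the bounded $w_2$ with nonnegative boundary values and decay at $+\infty$) gives $w_2 \geq 0$. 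Thus the proof reduces to showing $w_1 \geq 0$ in $\R^N_+$.

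To do this I argue by contradiction, assuming $m_1 := \inf_{\R^N_+} w_1 < 0$. Lipschitz continuity of $w_1$ together with $w_1 \geq 0$ on $\partial\R^N_+$ and its uniform decay at $+\infty$ confine the $N$-th coordinate of any minimizing sequence $(x_n)$ to a compact subinterval of $(0, +\infty)$. Tangentially translating the pair by $(x_n', 0)$ and invoking interior elliptic estimates together with Arzel\`a--Ascoli, I would extract a subsequential limit $(z_1^\star, z_2^\star)$ still satisfying all the hypotheses of the lemma, such that the corresponding $w_1^\star := z_2^\star - z_1^\star$ attains its infimum $m_1$ at an interior point $P = (0, \bar x_N) \in \R^{N-1}\times(0, +\infty)$. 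Since $\Delta w_1^\star(P) \geq 0$, the first equation of the system gives $w_2^\star(P) \leq \lambda m_1$, hence $m_2 := \inf_{\R^N_+} w_2 \leq \lambda m_1 < 0$. Running the analogous translation/limit construction on a minimizing sequence for $w_2$ then produces another limit pair $(z_1^{\star\star}, z_2^{\star\star})$ in which $w_2^{\star\star}$ attains its infimum $m_2$ at an interior point $Q$; from $\Delta w_2^{\star\star}(Q) \geq 0$ and the second equation one obtains $(c(Q) + \omega)\, w_1^{\star\star}(Q) \leq (\beta - \lambda) m_2 < 0$, which forces $c(Q) + \omega > 0$ and $w_1^{\star\star}(Q) < 0$. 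Using $w_1^{\star\star}(Q) \geq m_1$, $m_2 \leq \lambda m_1$, and $\lambda(\beta-\lambda) = \omega$ yields the chain
\begin{equation*}
m_1 (c(Q) + \omega) \leq (\beta-\lambda) m_2 \leq \omega m_1,
\end{equation*}
from which $c(Q) \geq 0$ follows because $m_1 < 0$.

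The final contradiction is extracted from the localization hypothesis $z_2 \geq \alpha_+ - \delta$ in $\R^N_+$ together with the strict monotonicity of $f$ on $[\alpha_+ - \delta, \alpha_+]$: $w_1^{\star\star}(Q) < 0$ means $z_1^{\star\star}(Q) > z_2^{\star\star}(Q) \geq \alpha_+ - \delta$, so both values lie in the interval $[\alpha_+ - \delta, \alpha_+]$, and strict monotonicity there gives $c(Q) < 0$, contradicting the bound $c(Q) \geq 0$ just derived. Hence $m_1 \geq 0$, and then $w_2 \geq 0$ by the reduction noted above. The main obstacle in this strategy is that the coupled system is not cooperative --- the coefficient $c$ of $w_1$ in the equation for $w_2$ is only controlled from below by $-\omega$ and may change sign on $\{w_1 \geq 0\}$ --- so no one-shot maximum principle is available. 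The two-stage sliding argument is needed precisely so that one can bring in, at the location $Q$ where $w_2$ attains its (negative) infimum, the fact that any negativity of $w_1$ must occur inside the region where both $z_1$ and $z_2$ lie in the strict-monotonicity interval of $f$; this is where the localization assumption $z_2 \geq \alpha_+ - \delta$ does the key work.
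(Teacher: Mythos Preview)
Your proof is correct and follows essentially the same route as the paper: rewrite the fourth-order equation as a coupled second-order system for $w_1=z_2-z_1$ and $w_2=(\Delta z_1-\lambda z_1)-(\Delta z_2-\lambda z_2)$, run the tangential translation--compactness argument twice to locate interior extrema, combine the resulting inequalities (using $\lambda(\beta-\lambda)=\omega$) to force the chord slope $c(Q)\ge 0$, and contradict the strict decrease of $f$ on $[\alpha_+-\delta,\alpha_+]$. The only differences from the paper are cosmetic: your $w_2$ is the negative of the paper's $w$ (so infima replace suprema), and you split off $w_2\ge 0$ as a separate consequence of $w_1\ge 0$ via the scalar weak maximum principle, whereas the paper obtains both signs simultaneously from the single contradiction argument.
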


\begin{proof}
By writing 
$$v=z_2-z_1\ \hbox{ and }\ w=\Delta v-\lambda\,v,$$
the desired conclusion can be reformulated as $\inf_{\R^N_+}v\ge0$ and $\sup_{\R^N_+}w\le0$. Assume for the sake of contradiction that
\be\label{mM}
m:=\inf_{\R^N_+}v<0\ \hbox{ or }\ M:=\sup_{\R^N_+}w>0.
\ee
Let $(x_n)_{n\in\N}=(x'_n,x_{N,n})_{n\in\N}$ and $(y_n)_{n\in\N}=(y'_n,y_{N,n})_{n\in\N}$ be some sequences in $\R^N_+$ such that
$$v(x_n)\to m\ \hbox{ and }w(y_n)\to M\ \hbox{ as }n\to+\infty.$$
By standard interior elliptic estimates~\cite{bjs,dn}, as recalled at the beginning of Section~\ref{sec3}, the functions $z_1$ and $z_2$ have bounded H\"older continuous derivatives up to the fourth order. In particular, both functions $v$ and $w$ are uniformly continuous. Furthermore, since~$\lim_{x_N\to+\infty}z_1(x',x_N)=\lim_{x_N\to+\infty}z_2(x',x_N)=\alpha_+$ uniformly in $x'\in\R^{N-1}$, one infers that $\lim_{x_N\to+\infty}\Delta z_1(x',x_N)=\lim_{x_N\to+\infty}\Delta z_2(x',x_N)=0$ uniformly in $x'\in\R^{N-1}$. Hence,
$$\lim_{x_N\to+\infty}v(x',x_N)=0\ \hbox{ and }\ \lim_{x_N\to+\infty}w(x',x_N)=0\ \hbox{ uniformly in }x'\in\R^{N-1}.$$

\medbreak

{\it Claim 1: $m<0$ implies $M>0$}. Assume $m<0$. Since $v\ge0$ on $\partial\R^N_+$, it follows from the previous observations that there exists a real number~$x_{N,\infty}\in(0,+\infty)$ such that, up to extraction of a subsequence,~$x_{N,n}\to x_{N,\infty}$ as $n\to+\infty$. Denote
$$Z_{1,n}(x)=z_1(x'+x'_n,x_N)\hbox{ and }Z_{2,n}(x)=z_2(x'+x'_n,x_N)$$
for all $n\in\N$ and $x=(x',x_N)\in\R^N$. Up to extraction of a subsequence, the functions~$Z_{1,n}$ and~$Z_{2,n}$ converge in $C^4_{loc}(\R^N)$ to two classical bounded solutions $Z_1$ and $Z_2$ of~\eqref{fourth2} such that $\alpha_-\le Z_1,Z_2\le\alpha_+$ in $\R^N$, $Z_2\ge\alpha_+-\delta$ in $\R^N_+$, $V:=Z_2-Z_1\ge m$ in $\R^N_+$, $V(0,x_{N,\infty})=m$ and $W:=\Delta V-\lambda\,V\le M$ in $\R^N_+$. In particular, since $(0,x_{N,\infty})$ is an interior minimum point of $V$ in $\R^N_+$, one infers that
\be\label{lambdam}
M\ge W(0,x_{N,\infty})=\Delta V(0,x_{N,\infty})-\lambda\,V(0,x_{N,\infty})\ge-\lambda\,V(0,x_{N,\infty})=-\lambda\,m>0
\ee
since $\lambda>0$, and $m<0$ by assumption. Therefore, $m<0$ implies $M>0$ as claimed.

\medbreak

{\it Claim 2: $M>0$ implies $m<0$}. Assume now that $M>0$. Since $w\le0$ on $\partial\R^N_+$ and~$\lim_{x_N\to+\infty}w(x',x_N)=0$ uniformly in $x'\in\R^{N-1}$, it follows that there exists a real number~$y_{N,\infty}\in(0,+\infty)$ such that, up to extraction of a subsequence, $y_{N,n}\to y_{N,\infty}$ as~$n\to+\infty$. Denote
$$\tilde{Z}_{1,n}(x)=z_1(x'+y'_n,x_N)\hbox{ and }\tilde{Z}_{2,n}(x)=z_2(x'+y'_n,x_N)$$
for all $n\in\N$ and $x=(x',x_N)\in\R^N$. Up to extraction of a subsequence, the functions~$\tilde{Z}_{1,n}$ and~$\tilde{Z}_{2,n}$ converge in $C^4_{loc}(\R^N)$ to two classical bounded solutions $\tilde{Z}_1$ and $\tilde{Z}_2$ of~\eqref{fourth2} such that $\alpha_-\le\tilde{Z}_1,\tilde{Z}_2\le\alpha_+$ in $\R^N$, $\tilde{Z}_2\ge\alpha_+-\delta$ in $\R^N_+$, $\tilde{V}:=\tilde{Z}_2-\tilde{Z}_1\ge m$ in $\R^N_+$, $\tilde{W}:=\Delta\tilde{V}-\lambda\tilde{V}\le M$ in $\R^N_+$ and $\tilde{W}(0,y_{N,\infty})=M$. Let $\tilde{\lambda}>0$ be the other root of the equation
$$\tilde{\lambda}^2-\beta\tilde{\lambda}+\omega=0,$$
that is, $\tilde{\lambda}=\omega/\lambda=\beta-\lambda$. The function $\tilde{W}=\Delta\tilde{V}-\lambda\,\tilde{V}$ is of class $C^2(\R^N)$ and it satisfies
$$\Delta\tilde{W}-\tilde{\lambda}\,\tilde{W}=\Delta^2\tilde{V}-\beta\Delta\tilde{V}+\omega\,\tilde{V}=f(\tilde{Z}_2)-f(\tilde{Z}_1)+\omega\,(\tilde{Z}_2-\tilde{Z}_1)\hbox{ in }\R^N.$$
The point $(0,y_{N,\infty})$ is an interior maximum point of the function $\tilde{W}$ in $\R^N_+$, whence $\Delta\tilde{W}(0,y_{N,\infty})\le0$, while $\tilde{\lambda}>0$ and $\tilde{W}(0,y_{N,\infty})=M>0$. Therefore,
\be\label{lambdaM}\baa{rcl}
0>-\tilde{\lambda}\,M & \!\!\!\ge\!\!\! & \Delta\tilde{W}(0,y_{N,\infty})-\tilde{\lambda}\,\tilde{W}(0,y_{N,\infty})\vspace{3pt}\\
& \!\!\!=\!\!\! & f(\tilde{Z}_2(0,y_{N,\infty}))-f(\tilde{Z}_1(0,y_{N,\infty}))+\omega\,(\tilde{Z}_2(0,y_{N,\infty})-\tilde{Z}_1(0,y_{N,\infty}))\vspace{3pt}\\
& \!\!\!=\!\!\! & (\sigma+\omega)\,\tilde{V}(0,y_{N,\infty}),\eaa
\ee
where
$$\sigma=\frac{f(\tilde{Z}_2(0,y_{N,\infty}))-f(\tilde{Z}_1(0,y_{N,\infty}))}{\tilde{Z}_2(0,y_{N,\infty})-\tilde{Z}_1(0,y_{N,\infty})}\ \hbox{ if }\tilde{V}(0,y_{N,\infty})=\tilde{Z}_2(0,y_{N,\infty})-\tilde{Z}_1(0,y_{N,\infty})\neq0$$
and, say, $\sigma=0$ if $\tilde{V}(0,y_{N,\infty})=\tilde{Z}_2(0,y_{N,\infty})-\tilde{Z}_1(0,y_{N,\infty})=0$. Since $\alpha_-\le\tilde{Z}_1(0,y_{N,\infty}),\tilde{Z}_2(0,y_{N,\infty})\le\alpha_+$ and $\omega>0$ satisfies~\eqref{hypomega}, one has $\sigma+\omega\ge0$. The case~$\sigma+\omega=0$ is impossible due to the strict sign in~\eqref{lambdaM}, whence $\sigma+\omega>0$. As a consequence, $\tilde{V}(0,y_{N,\infty})<0$, that is, $\tilde{Z}_2(0,y_{N,\infty})<\tilde{Z}_1(0,y_{N,\infty})$. In particular,~$m\le\tilde{V}(0,y_{N,\infty})<0$. Therefore our claim follows. Since $\sigma+\omega>0$, observe furthermore that~\eqref{lambdaM} implies
\be\label{tildelambda}
-\tilde{\lambda}\,M\ge(\sigma+\omega)\,m.
\ee

\medbreak

{\it Conclusion}. The previous claims show that the property $m<0$ is equivalent to~$M>0$. Therefore, if we assume~\eqref{mM}, the calculations in the proofs of the two claims hold. Now, by multiplying~\eqref{lambdam} by $\tilde{\lambda}>0$ and adding it to~\eqref{tildelambda}, one gets that
$$0\ge-\lambda\,\tilde{\lambda}\,m+(\sigma+\omega)\,m=\sigma\,m$$
since $\lambda\,\tilde{\lambda}=\omega$. As $m<0$, this yields $\sigma\ge0$, whence
\be\label{finfty}
f(\tilde{Z}_2(0,y_{N,\infty}))\le f(\tilde{Z}_1(0,y_{N,\infty}))
\ee
(remember that $\tilde{Z}_2(0,y_{N,\infty})<\tilde{Z}_1(0,y_{N,\infty})$). On the other hand, $\tilde{Z}_2(0,y_{N,\infty})\ge\alpha_+-\delta$ (since $z_2,\tilde{Z}_2\ge\alpha_+-\delta$ in $\R^N_+$) and $\tilde{Z}_1(0,y_{N,\infty})\le\alpha_+$, whence
$$\alpha_+-\delta\le\tilde{Z}_2(0,y_{N,\infty})<\tilde{Z}_1(0,y_{N,\infty})\le\alpha_+.$$
Since $f$ is assumed to be decreasing in the interval $[\alpha_+-\delta,\alpha_+]$, one gets that $f(\tilde{Z}_2(0,y_{N,\infty}))>f(\tilde{Z}_1(0,y_{N,\infty}))$, contradicting~\eqref{finfty}. Henceforth, we conclude that~\eqref{mM} cannot hold, that is, $\inf_{\R^N}v=m\ge0$, $\sup_{\R^N}w=M\le0$ and the proof is thereby complete.
\end{proof}

\begin{rem}{\rm It follows from the proof that the functions $z_1$ and $z_2$ satisfy the following two-component system of second order elliptic equations:
$$\left\{\baa{rcl}
-\Delta v+\lambda\,v & = & -w,\vspace{3pt}\\
-\Delta w+\tilde{\lambda}\,w & = & -f(z_2)+f(z_1)-\omega\,(z_2-z_1)=-b(x)\,v\eaa\right.$$
in $\R^N_+$, with $v=z_2-z_1$, $b(x)\ge0$ (by~\eqref{hypomega}) in $\R^N_+$, $v\ge0$ on $\partial\R^N_+$ and $w\le0$ on $\partial\R^N_+$. This system is competitive and some maximum principles are known to hold for competitive systems in bounded domains. However, the comparison result proved in Lemma~\ref{lem3} is new. Here, not only the domain $\R^N_+$ is unbounded and one also has to use in the proof the fact that, in $\R^N_+$, $z_2$ takes values in the interval $[\alpha_+-\delta,\alpha_+]$, in which $f$ is decreasing.}
\end{rem}

\begin{rem}{\rm The second line in~\eqref{hypz12} is actually not necessary. To see it, notice first that, since the functions $z_1$ and $z_2$ are assumed to be bounded in $\R^N$ (they range in $[\alpha_-,\alpha_+]$), the function $w$ is bounded in $\R^N$ from the interior elliptic estimates. If we drop the second line in~\eqref{hypz12}, the main change is that any of the sequences $(x_{N,n})_{n\in\N}$ and $(y_{N,n})_{n\in\N}$ introduced in the proof may well converge to $+\infty$ up to extraction of a subsequence. If for instance $\lim_{n\to+\infty}x_{N,n}=+\infty$, one would now define $Z_{i,n}(x)=z_i(x+x_n)=z_i(x'+x'_n,x_N+x_{N,n})$ for $i=1,2$ and pass to the limit up to extraction of a subsequence. The limiting functions $Z_1$, $Z_2$, $V$ and $W$ would then satisfy the same properties as in the proof of Lemma~\ref{lem3}, but now in the whole space $\R^N$ (in particular, $Z_2\ge\alpha_+-\delta$ in $\R^N$, $V=Z_2-Z_1\ge m$ in~$\R^N$, $V(0,0)=m$ and $W=\Delta V-\lambda\,V\le M$ in $\R^N$). The inequalities established in the above proof would still be the same, with $x_{N,\infty}=0$. Similarly, if $\lim_{n\to+\infty}y_{N,n}=+\infty$, one would define new functions $\tilde{Z}_{i,n}$ and new limiting functions $\tilde{Z}_i$, $\tilde{V}$ and $\tilde{W}$ and one would complete the proof with $y_{N,\infty}=0$. In the statement of Lemma~\ref{lem3}, we preferred to keep the second line in~\eqref{hypz12} for the sake of simplicity, and since this assumption will always be satisfied when Lemma~\ref{lem3} will be used in the proof of Theorems~\ref{th3} and~\ref{th4}.}
\end{rem}

The following lemma is the counterpart of Lemma~\ref{lem3} in the half-space $\R^N_-=\R^{N-1}\times(-\infty,0]$, when $z_1$ is close to $\alpha_-$.

\begin{lem}\label{lem4}
Let $f:[\alpha_-,\alpha_+]\to\R$ be a Lipschitz-continuous function such that $f(\alpha_-)=0$ and $f$ is decreasing in $[\alpha_-,\alpha_-+\delta]$ for some $\delta>0$. Let $\omega>0$ satisfy~\eqref{hypomega}, let $\beta\ge2\sqrt{\omega}$ and let $\lambda>0$ be any root of the equation $\lambda^2-\beta\lambda+\omega=0$. If $z_1$ and $z_2$ are two classical bounded solutions of~\eqref{fourth2} such that $\alpha_-\le z_1,z_2\le\alpha_+$ in $\R^N$ and 
$$\left\{\begin{array}{l}
z_1\le\alpha_-+\delta\hbox{ in }\R^N_+,\vspace{3pt}\\
\displaystyle\lim_{x_N\to-\infty}z_1(x_1,\dots,x_N)=\lim_{x_N\to-\infty}z_2(x_1,\dots,x_N)=\alpha_-\\
\qquad\qquad\qquad\qquad\qquad\qquad\qquad\qquad\hbox{ uniformly in }(x_1,\dots,x_{N-1})\in\R^{N-1},\vspace{3pt}\\
z_1\le z_2\hbox{ and }\Delta z_1-\lambda\,z_1\ge\Delta z_2-\lambda\,z_2\hbox{ on }\partial\R^N_-=\R^{N-1}\times\{0\},\end{array}\right.$$
then $z_1\le z_2$ and $\Delta z_1-\lambda\,z_1\ge\Delta z_2-\lambda\,z_2$ in $\R^N_-$.
\end{lem}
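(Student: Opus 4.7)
The proof is the mirror image of Lemma~\ref{lem3}, with $\alpha_-$ and $\R^N_-$ playing the roles of $\alpha_+$ and $\R^N_+$. I set $v=z_2-z_1$ and $w=\Delta v-\lambda v$ and note that $(v,w)$ satisfies the coupled second-order system
$$-\Delta v+\lambda v=-w,\qquad -\Delta w+\tilde\lambda w=-\bigl(\sigma(x)+\omega\bigr)\,v \quad \text{in }\R^N_-,$$
where $\tilde\lambda=\beta-\lambda=\omega/\lambda>0$ and $\sigma(x)$ is the Lipschitz difference quotient of $f$ between $z_1(x)$ and $z_2(x)$, so that $\sigma(x)+\omega\ge 0$ by~\eqref{hypomega}. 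The goal is to show $m:=\inf_{\R^N_-}v\ge 0$ and $M:=\sup_{\R^N_-}w\le 0$.

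Assume for contradiction that $m<0$ or $M>0$, and pick sequences $(x_n),(y_n)\subset\R^N_-$ with $v(x_n)\to m$ and $w(y_n)\to M$. Since $z_1,z_2\to\alpha_-$ uniformly in $x'$ as $x_N\to-\infty$, the interior elliptic estimates give $\Delta z_i\to 0$ uniformly in $x'$ as $x_N\to-\infty$, hence both $v$ and $w$ tend to $0$ uniformly in $x'$ as $x_N\to-\infty$. Combined with the boundary inequalities $v\ge 0$ and $w\le 0$ on $\partial\R^N_-$, the $x_N$-coordinates $x_{N,n}$ and $y_{N,n}$ are bounded away from both $0$ and $-\infty$ (whenever $m<0$, resp.\ $M>0$), so after extraction $x_{N,n}\to x_{N,\infty}\in(-\infty,0)$ and $y_{N,n}\to y_{N,\infty}\in(-\infty,0)$. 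The horizontal translates of $z_1$ and $z_2$ converge in $C^4_{\rm loc}(\R^N)$ to bounded classical solutions $Z_1,Z_2$ (resp.\ $\tilde Z_1,\tilde Z_2$) of~\eqref{fourth2} taking values in $[\alpha_-,\alpha_+]$, and the crucial one-sided bound $z_1\le\alpha_-+\delta$ passes to the limit as $\tilde Z_1\le\alpha_-+\delta$ in $\R^N_-$.

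The two claims of Lemma~\ref{lem3} then transfer verbatim. At the interior minimum $(0,x_{N,\infty})$ of $V:=Z_2-Z_1$ one has $\Delta V\ge 0$, so $M\ge W(0,x_{N,\infty})\ge -\lambda m$; thus $m<0\Rightarrow M>0$. At the interior maximum $(0,y_{N,\infty})$ of $\tilde W:=\Delta\tilde V-\lambda\tilde V$ (with $\tilde V:=\tilde Z_2-\tilde Z_1$), the equation $\Delta\tilde W-\tilde\lambda\tilde W=(\sigma+\omega)\tilde V$ together with $\tilde W(0,y_{N,\infty})=M>0$ and $\Delta\tilde W(0,y_{N,\infty})\le 0$ yields $(\sigma+\omega)\tilde V(0,y_{N,\infty})\le -\tilde\lambda M<0$, forcing $\sigma+\omega>0$, $\tilde V(0,y_{N,\infty})<0$ (hence $m<0$), and the sharp inequality $-\tilde\lambda M\ge (\sigma+\omega)m$. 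Combining the two sharp inequalities using $\lambda\tilde\lambda=\omega$ gives $\sigma m\le 0$, hence $\sigma\ge 0$.

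The only place where this proof genuinely differs from that of Lemma~\ref{lem3} is in closing the contradiction: since $\tilde Z_2(0,y_{N,\infty})<\tilde Z_1(0,y_{N,\infty})$ while $\alpha_-\le\tilde Z_2(0,y_{N,\infty})$ and $\tilde Z_1(0,y_{N,\infty})\le\alpha_-+\delta$, both values lie in the interval $[\alpha_-,\alpha_-+\delta]$ on which $f$ is now assumed to be (strictly) decreasing. Hence $f(\tilde Z_2(0,y_{N,\infty}))>f(\tilde Z_1(0,y_{N,\infty}))$, which with $\tilde Z_2(0,y_{N,\infty})<\tilde Z_1(0,y_{N,\infty})$ gives $\sigma<0$, contradicting $\sigma\ge 0$. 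The only real obstacle is the compactness step locating the blow-up points in $(-\infty,0)$, which relies on the uniform decay of $v$ and $w$ as $x_N\to-\infty$ and the boundary inequalities; once this is in hand, the rest is a direct transcription of Lemma~\ref{lem3} with the roles of $\alpha_+$ and $\alpha_-$ interchanged.
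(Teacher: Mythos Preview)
Your argument is correct: replaying the proof of Lemma~\ref{lem3} with $\alpha_-$ and $\R^N_-$ in place of $\alpha_+$ and $\R^N_+$ works, and your closing contradiction (both values landing in $[\alpha_-,\alpha_-+\delta]$ where $f$ is strictly decreasing, forcing $\sigma<0$) is exactly the right mirror of the original. Incidentally, you silently corrected the evident typo in the statement (the hypothesis should read $z_1\le\alpha_-+\delta$ in $\R^N_-$, not $\R^N_+$), which is consistent with how the lemma is actually invoked in the proof of Theorem~\ref{th3}.

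The paper, however, does not redo the argument: it observes that the change of variables
\[
\tilde f(s)=-f(\alpha_-+\alpha_+-s),\qquad \tilde z_1(x',x_N)=\alpha_-+\alpha_+-z_2(x',-x_N),\qquad \tilde z_2(x',x_N)=\alpha_-+\alpha_+-z_1(x',-x_N)
\]
transforms the hypotheses of Lemma~\ref{lem4} into exactly the hypotheses of Lemma~\ref{lem3} (with $\tilde f$ decreasing on $[\alpha_+-\delta,\alpha_+]$, $\tilde z_2\ge\alpha_+-\delta$ in $\R^N_+$, limits $\alpha_+$ as $x_N\to+\infty$, and the same boundary inequalities), and the conclusion of Lemma~\ref{lem3} for $(\tilde z_1,\tilde z_2)$ unwinds to the desired conclusion for $(z_1,z_2)$. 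This symmetry reduction is a one-line proof. Your direct approach has the merit of being self-contained and making transparent where the monotonicity of $f$ near $\alpha_-$ is used; the paper's approach is shorter and avoids repeating the compactness and limit-extraction machinery.
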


\begin{proof}
The conclusion of Lemma~\ref{lem4} follows immediately from Lemma~\ref{lem3} applied to the functions $\tilde{f}(s)=-f(\alpha_-+\alpha_+-s)$, $\tilde{z}_1(x',x_N)=\alpha_-+\alpha_+-z_2(x',-x_N)$ and $\tilde{z}_2(x',x_N)=\alpha_-+\alpha_+-z_1(x',-x_N)$.
\end{proof}

With Lemmas~\ref{lem3} and~\ref{lem4} at hand, we can turn to the proof of Theorems~\ref{th3} and~\ref{th4}. We rely on the sliding method adapted to the fourth order equation~\eqref{fourth2}. Namely, for any vector $\xi'\in\R^{N-1}$, any $\tau\in\R$ and $x=(x',x_N)\in\R^N$, we set
\begin{equation}\label{eq:utau}
u_{\tau}(x)=u_{\tau}(x',x_N)=u(x'+\xi',x_N-\tau).
\end{equation}
The strategy then consists in two main steps. First we show that $u_{\tau}\le u$ in $\R^N$ for $\tau>0$ large enough and then we decrease $\tau$ and prove that actually, $u_{\tau}\le u$ in $\R^N$ for all $\tau\ge0$. Finally we show that the freedom in the choice of the vector $\xi'\in\R^{N-1}$ implies that $u$ depends only on $x_{N}$.\hfill\break

\begin{proof}[Proof of Theorem~$\ref{th3}$.]
Let $f$, $\delta$, $\omega$, $\beta$ and $u$ be as in the statement of Theorem~\ref{th3}. Notice that the strict monotonicity of $f$ in $[\alpha_-,\alpha_-+\delta]$ and $[\alpha_+-\delta,\alpha_+]$ implies that
$$\delta<\frac{\alpha_+-\alpha_-}{2}.$$
Let $\lambda>0$ be any root of the equation $\lambda^2-\beta\,\lambda+\omega=0$.\par
Since $u(x',x_N)$ converges to the constants $\alpha_{\pm}$ as $x_N\to\pm\infty$ and since the function $u$ and its derivatives up to the fourth order are bounded and H\"older continuous in $\R^N$ from the interior elliptic estimates~\cite{bjs,dn}, it follows in particular that $\Delta u(x',x_N)\to0$ as $x_N\to\pm\infty$ uniformly in $x'\in\R^{N-1}$. From the assumption~\eqref{uniformalpha}, there is a real number $A>0$ such that
\be\label{choiceA}\left\{\baa{l}
u\ge\alpha_+-\delta\hbox{ in }\R^{N-1}\times[A,+\infty),\ \ u\le\alpha_-+\delta\hbox{ in }\R^{N-1}\times(-\infty,-A],\vspace{3pt}\\
\displaystyle|\Delta u|\le\frac{\lambda\,(\alpha_+-\alpha_--2\delta)}{2}\hbox{ in }\R^{N-1}\times(-\infty,-A]\ \cup\ \R^{N-1}\times[A,+\infty).\eaa\right.
\ee

\medbreak

{\it Step 1: comparisons for large $\tau$.} We show here that $u_{\tau}\le u$ in $\R^N$ for all $\tau\ge2A$. Let~$\tau\in[2A,+\infty)$ be given. For any $x'\in\R^{N-1}$, one has
\be\label{ineq1}
u_{\tau}(x',A)=u(x'+\xi',A-\tau)\le\alpha_-+\delta\le\alpha_+-\delta\le u(x',A)
\ee
and
\be\label{ineq2}\baa{rcl}
\Delta u_{\tau}(x',A)-\lambda\,u_{\tau}(x',A) & = & \Delta u(x'+\xi',A-\tau)-\lambda\,u(x'+\xi',A-\tau)\vspace{3pt}\\
& \ge & \displaystyle-\frac{\lambda\,(\alpha_+-\alpha_--2\delta)}{2}-\lambda\,(\alpha_-+\delta)\vspace{3pt}\\
& = & \displaystyle\frac{\lambda\,(\alpha_+-\alpha_--2\delta)}{2}-\lambda\,(\alpha_+-\delta)\vspace{3pt}\\
& \ge & \Delta u(x',A)-\lambda\,u(x',A).\eaa
\ee

Define $z_{1},z_{2}:\R^N\to[\alpha_-,\alpha_+]$ by $z_1:=u_{\tau}(\cdot,\cdot+A)$ and $z_2:=u(\cdot,\cdot+A)$. Since $z_{2} \ge\alpha_+-\delta$ in $\R^N_+$ and both $z_{1}$ and $z_{2}$ are classical solutions of~\eqref{fourth2} converging to $\alpha_+$ as $x_N\to+\infty$ uniformly in $x'\in\R^{N-1}$, we deduce from~\eqref{ineq1}-\eqref{ineq2} and Lemma~\ref{lem3} that $z_1\le z_2$ and $\Delta z_1-\lambda\,z_1\ge\Delta z_2-\lambda\,z_2$ in $\R^N_+$, that is,
$$u_{\tau}\le u\ \hbox{ and }\ \Delta u_{\tau}-\lambda\,u_{\tau}\ge\Delta u-\lambda\,u\ \hbox{ in }\R^{N-1}\times[A,+\infty).$$\par
Similarly, since $z_1=u_{\tau}(\cdot,\cdot+A)=u(\cdot+\xi',\cdot+A-\tau)\le\alpha_-+\delta$ in $\R^N_-$ (because $A-\tau\le-A$) and both $z_1$ and $z_2$ converge to $\alpha_-$ as $x_N\to-\infty$ uniformly in $x'\in\R^{N-1}$, it follows from~\eqref{ineq1}-\eqref{ineq2} and Lemma~\ref{lem4} that $z_1\le z_2$ and $\Delta z_1-\lambda\,z_1\ge\Delta z_2-\lambda\,z_2$ in~$\R^N_-$, that is,
$$u_{\tau}\le u\ \hbox{ and }\ \Delta u_{\tau}-\lambda\,u_{\tau}\ge\Delta u-\lambda\,u\ \hbox{ in }\R^{N-1}\times(-\infty,A].$$
Therefore, for all $\tau\ge2A$, $u_{\tau}\le u$ and $\Delta u_{\tau}-\lambda\,u_{\tau}\ge\Delta u-\lambda\,u$ in $\R^N$.\par

\medbreak

{\it Step 2: decreasing $\tau$.} Let us now set
\be\label{deftau*}
\tau^*=\inf\big\{\tau>0,\ u_{\tau'}\le u\hbox{ and }\Delta u_{\tau'}-\lambda\,u_{\tau'}\ge\Delta u-\lambda\,u\hbox{ in }\R^N\hbox{ for all }\tau'\ge\tau\big\}.
\ee
It follows from Step 1 that $\tau^*$ is a nonnegative real number such that $\tau^*\le 2A$. Furthermore, by continuity,
\be\label{utau*}
u_{\tau^*}\le u\ \hbox{ and }\ \Delta u_{\tau^*}-\lambda\,u_{\tau^*}\ge\Delta u-\lambda\,u\ \hbox{ in }\R^N.
\ee
Our final aim is to show that $\tau^*=0$. Assume by contradiction that $\tau^*>0$.\par
We first claim that
\be\label{claim1}
\inf_{\R^{N-1}\times[-A,A]}(u-u_{\tau^*})>0.
\ee
Assume by contradiction that this is not the case. Then, we have  
$$\inf_{\R^{N-1}\times[-A,A]}(u-u_{\tau^*})=0$$ and there exists a sequence $(x_n)_{n\in\N}=(x'_n,x_{N,n})_{n\in\N}$ of points in $\R^{N-1}\times[-A,A]$ such that~$u(x_n)-u_{\tau^*}(x_n)\to0$ as $n\to+\infty$. Up to extraction of a subsequence, one can assume that $x_{N,n}\to x_{N,\infty}\in[-A,A]$ as $n\to+\infty$. Define
$$U_n(x)=U_n(x',x_N)=u(x'+x'_n,x_N)$$
in $\R^N$. The functions $U_n$ are bounded in $C^{4,\alpha}(\R^N)$ for any $\alpha\in[0,1)$ from the interior elliptic estimates~\cite{bjs,dn}. Up to extraction of a subsequence, they converge in $C^4_{loc}(\R^N)$ to a classical solution $U:\R^N\to[\alpha_-,\alpha_+]$ of~\eqref{fourth2} such that
$$V:=U-U_{\tau^*}=U-U(\cdot+\xi',\cdot-\tau^*)\ge 0\ \hbox{ in }\R^N,$$
$V(0,x_{N,\infty})=0$ and $W:=\Delta V-\lambda\,V\le 0$ in $\R^N$. The strong maximum principle applied to $V$ yields $V=0$ in $\R^N$, that is, $U(x',x_N)=U(x'+\xi',x_N-\tau^*)$ for all $x=(x',x_N)\in\R^N$. However, $U$ still satisfies~\eqref{uniformalpha} by the uniformity with respect to $x'=(x_1,\dots,x_{N-1})\in\R^{N-1}$ in~\eqref{uniformalpha}. Thus, the positivity of $\tau^*$ leads to a contradiction. As a consequence,~\eqref{claim1} holds.\par
We next claim that
\be\label{claim2}
\sup_{\R^{N-1}\times[-A,A]}\big(\Delta(u-u_{\tau^*})-\lambda\,(u-u_{\tau^*})\big)<0.
\ee
Assume again by contradiction that this is not the case. Then, we deduce from~\eqref{utau*} that 
$$\sup_{\R^{N-1}\times[-A,A]}\big(\Delta(u-u_{\tau^*})-\lambda\,(u-u_{\tau^*})\big)=0$$ and there exists a sequence $(y_n)_{n\in\N}=(y'_n,y_{N,n})_{n\in\N}$ of points in $\R^{N-1}\times[-A,A]$ such that
$$\Delta(u-u_{\tau^*})(y_n)-\lambda\,(u(y_n)-u_{\tau^*}(y_n))\to0\ \hbox{ as }\ n\to+\infty.$$
Up to extraction of a subsequence, one can assume that $y_{N,n}\to y_{N,\infty}\in[-A,A]$ as $n\to+\infty$. Define
$$\tilde{U}_n(x)=\tilde{U}_n(x',x_N)=u(x'+y'_n,x_N)$$
in $\R^N$. As above, up to extraction of  a subsequence, the functions $\tilde{U}_n$ converge in $C^4_{loc}(\R^N)$ to a classical solution $\tilde{U}:\R^N\to[\alpha_-,\alpha_+]$ of~\eqref{fourth2} such that
$$\tilde{V}:=\tilde{U}-\tilde{U}_{\tau^*}=\tilde{U}-\tilde{U}(\cdot+\xi',\cdot-\tau^*)\ge 0\ \hbox{ in }\R^N,$$
$\tilde{W}:=\Delta\tilde{V}-\lambda\tilde{V}\le 0$ in $\R^N$ and $\tilde{W}(0,y_{N,\infty})=0$. Let $\tilde{\lambda}=\omega/\lambda=\beta-\lambda>0$ be the other root of $\tilde{\lambda}^2-\beta\lambda+\omega=0$. As in the proof of Lemma~\ref{lem3}, the function $\tilde{W}$ is a $C^2(\R^N)$ solution of
$$\Delta\tilde{W}-\tilde{\lambda}\,\tilde{W}=f(\tilde{U}(x))-f(\tilde{U}_{\tau^*}(x))+\omega\,\tilde{V}(x)=(\varsigma(x)+\omega)\,\tilde{V}(x),$$
where
$$\varsigma(x)=\frac{f(\tilde{U}(x))-f(\tilde{U}_{\tau^*}(x))}{\tilde{U}(x)-\tilde{U}_{\tau^*}(x)}\ \hbox{ if }\tilde{V}(x)=\tilde{U}(x)-\tilde{U}_{\tau^*}(x)\neq 0$$
and, say, $\varsigma(x)=0$ if $\tilde{V}(x)=\tilde{U}(x)-\tilde{U}_{\tau^*}(x)=0$. Since both functions $\tilde{U}$ and $\tilde{U}_{\tau^*}$ range in~$[\alpha_-,\alpha_+]$ and since $\omega>0$ satisfies~\eqref{hypomega}, one has $\varsigma(x)+\omega\ge0$ for all $x\in\R^N$. Recalling that $\tilde{V}\ge0$ in $\R^N$, one infers that $\Delta\tilde{W}-\tilde{\lambda}\,\tilde{W}\ge0$ in $\R^N$. Since $\tilde{W}$ is nonpositive in $\R^N$ and vanishes at the point $(0,y_{N,\infty})$, the strong maximum principle implies that $\tilde{W}=0$ in $\R^N$. In other words, $\Delta\tilde{V}-\lambda\tilde{V}=0$ in $\R^N$. But the function $\tilde{V}$ is nonnegative and bounded. By passing to the limit along a sequence $(z_n)_{n\in\N}$ such that $\tilde{V}(z_n)\to\sup_{\R^N}\tilde{V}$ as $n\to+\infty$, it follows immediately that $\sup_{\R^N}\tilde{V}\le0$. Hence $\tilde{V}=0$ in $\R^N$, that is, $\tilde{U}(x',x_N)=\tilde{U}(x'+\xi',x_N-\tau^*)$ for all $x=(x',x_N)\in\R^N$. The positivity of $\tau^*$ and the limits~\eqref{uniformalpha} lead to a contradiction. Finally,~\eqref{claim2} holds.\par
Now, since both properties~\eqref{claim1} and~\eqref{claim2} hold and since by interior elliptic estimates both functions $u$ an $\Delta u$ are uniformly continuous in $\R^N$, there is  a real number $\tau_*\in(0,\tau^*)$ such that
$$u_{\tau}\le u\ \hbox{ and }\ \Delta u_{\tau}-\lambda\,u_{\tau}\ge\Delta u-\lambda\,u\ \hbox{ in }\R^{N-1}\times[-A,A]\ \hbox{ for all }\tau\in[\tau_*,\tau^*].$$
For any $\tau\in[\tau_*,\tau^*]$, since $u\ge\alpha_+-\delta$ in $\R^{N-1}\times[A,+\infty)$ by~\eqref{choiceA}, it then follows from Lemma~\ref{lem3} applied to $z_1:=u_{\tau}(\cdot,\cdot+A)$ and $z_2=u(\cdot,\cdot+A)$ that
$$u_{\tau}\le u\ \hbox{ and }\ \Delta u_{\tau}-\lambda\,u_{\tau}\ge\Delta u-\lambda\,u\ \hbox{ in }\R^{N-1}\times[A,+\infty).$$
Similarly, for any $\tau\in[\tau_*,\tau^*]$, since $u_{\tau}=u(\cdot+\xi',\cdot-\tau)\le\alpha_-+\delta$ in $\R^{N-1}\times(-\infty,-A]$ by~\eqref{choiceA} and $\tau\ge\tau_*\ge0$, it then follows from Lemma~\ref{lem4} applied to $z_1:=u_{\tau}(\cdot,\cdot-A)$ and $z_2=u(\cdot,\cdot-A)$ that
$$u_{\tau}\le u\ \hbox{ and }\ \Delta u_{\tau}-\lambda\,u_{\tau}\ge\Delta u-\lambda\,u\ \hbox{ in }\R^{N-1}\times(-\infty,-A].$$
Putting together the previous results implies that $u_{\tau}\le u$ and $\Delta u_{\tau}-\lambda\,u_{\tau}\ge\Delta u-\lambda\,u$ in~$\R^N$ for all $\tau\in[\tau_*,\tau^*]$. Since $\tau_*\in(0,\tau^*)$, this contradicts the minimality of $\tau^*$ in~\eqref{deftau*} and, as a consequence, $\tau^*=0$.\par

\medbreak

{\it Step 3: conclusion.} Since $\tau^*=0$, it follows from~\eqref{deftau*} that
$$u_{\tau}\le u\ \text{ and }\ \Delta u_{\tau}-\lambda\,u_{\tau}\ge\Delta u-\lambda\,u \text{ in } \R^N$$
for all $\tau\ge0$. Furthermore, the arguments based on the strong second order elliptic maximum principle imply, as in Step~2, that, for all $\tau>0$, 
$$u_{\tau}<u \text{ and } \Delta u_{\tau}-\lambda\,u_{\tau}>\Delta u-\lambda\,u \text{ in }\R^N.$$ 
Choosing now $\xi'=0$ shows that $u$ is increasing in the variable $x_N$. Moreover, by choosing an arbitrary vector $\xi'\in\R^{N-1}$ and its opposite vector $-\xi'$, it follows that, for all $x=(x',x_N)\in\R^N$, $u(x'+\xi',x_N-\tau)<u(x',x_N)$ and $u(x'-\xi',x_N-\tau)<u(x',x_N)$ for all~$\tau>0$, whence $u(x'+\xi',x_N)\le u(x',x_N)$ and~$u(x'-\xi',x_N)\le u(x',x_N)$ by passing to the limit as $\tau\to0^+$. Since $x=(x',x_N)\in\R^N$ and $\xi'\in\R^{N-1}$ are arbitrary, this means that~$u(x'+\xi',x_N)=u(x',x_N)$ for all $\xi'\in\R^{N-1}$ and $(x',x_N)\in\R^N$. In other words, $u$ depends only on the variable $x_N$. The proof of Theo\-rem~\ref{th3} is thereby complete.
\end{proof}

We now prove Theorem~$\ref{th4}$. As for Theorem~\ref{th3}, we again rely on the sliding method, but this time we will slide the one-dimensional kink $\phi$ with respect to $u$.

\begin{proof}[Proof of Theorem~$\ref{th4}$]
Let $f$, $\delta\in(0,(\alpha_+-\alpha_-)/2)$, $\omega>0$, $\beta\ge2\sqrt{\omega}$, $u$ and $\phi$ be as in the statement of Theorem~\ref{th4}. Without loss of generality, let us assume that $u(x',x_N)\to\alpha_+$ as~$x_N\to\pm\infty$ uniformly in $x'\in\R^{N-1}$ and that $\inf_{\R^N}u>\alpha_-$. Remember from Corollary~\ref{coroneside} and Remark~\ref{remoneside} that this yields automatically
$$\alpha_-\le u\le u_+\ \hbox{ in }\R^N.$$
Even if it means decreasing $\delta>0$, one can then assume that $\alpha_-+\delta<\inf_{\R^N}u$. Let $\lambda>0$ be any root of the equation $\lambda^2-\beta\lambda+\omega=0$. Our goal is to show that $u=\alpha_+$ in $\R^N$.\par
Let $\xi$ be any given real number. As in the proof of Theorem~\ref{th4}, there is a real number~$A>0$ such that
\be\label{choiceA2}\left\{\baa{l}
u\ge\alpha_+-\delta\hbox{ in }\R^{N-1}\times[A,+\infty),\ \ \phi(\xi+\cdot)\le\alpha_-+\delta\hbox{ in }(-\infty,-A],\vspace{3pt}\\
\displaystyle|\Delta u|\le\frac{\lambda\,(\alpha_+-\alpha_--2\delta)}{2}\hbox{ in }\R^{N-1}\times(-\infty,-A]\,\cup\,\R^{N-1}\times[A,+\infty),\vspace{3pt}\\
\displaystyle|\phi''(\xi+\cdot)|\le\frac{\lambda\,(\alpha_+-\alpha_--2\delta)}{2}\hbox{ in }(-\infty,-A]\cup[A,+\infty).\eaa\right.
\ee
For any $\tau\in\R$, we define $\phi_{\tau}=\phi(\cdot+\xi-\tau)$. 

\medbreak

{\it Claim 1: for all $\tau\ge2A$, $\phi_{\tau}\le u$ and $\phi_{\tau}''-\lambda\,\phi_{\tau}\ge\Delta u-\lambda\,u$ in $\R^N$}.
Let $\tau\ge2A$. As in~\eqref{ineq1} and~\eqref{ineq2}, one has, for any $x'\in\R^{N-1}$,
$$\phi_{\tau}(A)=\phi(\xi+A-\tau)\le\alpha_-+\delta\le\alpha_+-\delta\le u(x',A)$$
and
$$\baa{rcl}
\phi_{\tau}''(A)-\lambda\,\phi_{\tau}(A)=\phi''(\xi+A-\tau)-\lambda\,\phi(\xi+A-\tau) & \!\!\!\ge\!\!\! & \displaystyle-\frac{\lambda\,(\alpha_+-\alpha_--2\delta)}{2}-\lambda\,(\alpha_-+\delta)\vspace{3pt}\\
& \!\!\!=\!\!\! & \displaystyle\frac{\lambda\,(\alpha_+-\alpha_--2\delta)}{2}-\lambda\,(\alpha_+-\delta)\vspace{3pt}\\
& \!\!\!\ge\!\!\! & \Delta u(x',A)-\lambda\,u(x',A).\eaa$$
Define $z_1,z_{2}:\R\to[\alpha_-,\alpha_+]$ by $z_{1}:=\phi_{\tau}(\cdot+A)=\phi(\cdot+\xi+A-\tau)$ and $z_2:=u(\cdot,\cdot+A)$. Since $z_{2}\ge\alpha_+-\delta$ in $\R^N_+$ and both $z_{1}$ and $z_{2}$ are classical solutions of~\eqref{fourth2} converging to $\alpha_+$ as $x_N\to+\infty$ uniformly in $x'\in\R^{N-1}$, it follows from the previous estimates and Lemma~\ref{lem3} that 
$$z_1\le z_2\ \text{ and }\ \Delta z_1-\lambda\,z_1\ge\Delta z_2-\lambda\,z_2 \text{ in } \R^N_+,$$ that is
$$\phi_{\tau}\le u\ \hbox{ and }\ \phi_{\tau}''-\lambda\,\phi_{\tau}\ge\Delta u-\lambda\,u\ \hbox{ in }\R^{N-1}\times[A,+\infty).$$
Similarly, since $z_1=\phi_{\tau}(\cdot+A)=\phi(\cdot+\xi+A-\tau)\le\alpha_-+\delta$ in $(-\infty,0]$ and both $z_1$ and~$z_2$ converge to $\alpha_-$ as $x_N\to-\infty$ uniformly in $x'\in\R^{N-1}$, it then follows from Lemma~\ref{lem4} that $z_1\le z_2$ and $\Delta z_1-\lambda\,z_1\ge\Delta z_2-\lambda\,z_2$ in $\R^N_-$, that is,
$$\phi_{\tau}\le u\ \hbox{ and }\ \phi_{\tau}''-\lambda\,\phi_{\tau}\ge\Delta u-\lambda\,u\ \hbox{ in }\R^{N-1}\times(-\infty,A].$$
Therefore the claim is proved.

\medbreak

Define now
\be\label{deftau*2}
\tau^*=\inf\big\{\tau>0,\ \phi_{\tau'}\le u\ \hbox{ and }\ \phi_{\tau'}''-\lambda\,\phi_{\tau'}\ge\Delta u-\lambda\,u\ \hbox{ in }\R^N\hbox{ for all }\tau'\ge\tau\big\}.
\ee
It follows from Claim~1 that $\tau^*$ is a nonnegative real number such that $\tau^*\le 2A$. Furthermore, by continuity, we infer that
\be\label{utau*bis}
\phi_{\tau^*}\le u\ \hbox{ and }\ \phi_{\tau^*}''-\lambda\,\phi_{\tau^*}\ge\Delta u-\lambda\,u\ \hbox{ in }\R^N.
\ee

\medbreak

{\it Claim 2: $\tau^*=0$}. Assuming by contradiction that $\tau^*>0$, we first aim to show that
\be\label{claim1bis}
\inf_{\R^{N-1}\times[-A,A]}(u-\phi_{\tau^*})>0.
\ee
Assume this inequality does not hold. Then $\inf_{\R^{N-1}\times[-A,A]}(u-\phi_{\tau^*})=0$ and there exists a sequence $(x_n)_{n\in\N}=(x'_n,x_{N,n})_{n\in\N}$ of points in $\R^{N-1}\times[-A,A]$ such that~$u(x_n)-~\phi_{\tau^*}(x_{N,n})\to~0$ as $n\to+\infty$. Up to extraction of a subsequence, one can assume that $x_{N,n}\to x_{N,\infty}\in[-A,A]$ as $n\to+\infty$. Define $U_n(x)=U_n(x',x_N)=u(x'+x'_n,x_N)$ in $\R^N$. As in the proof of Theorem~\ref{th3}, up to extraction of  a subsequence, the functions~$U_n$ converge in $C^4_{loc}(\R^N)$ to a classical solution $U:\R^N\to[\alpha_-,\alpha_+]$ of~\eqref{fourth2} such that~$V:=U-\phi_{\tau^*}\ge 0$ in $\R^N$, $V(0,x_{N,\infty})=0$ and $W:=\Delta V-\lambda\,V\le 0$ in $\R^N$. The strong maximum principle applied to $V$ yields $V=0$ in $\R^N$, that is, $U(x',x_N)=\phi(\xi+x_N-\tau^*)$ for all $x=(x',x_N)\in\R^N$. However, $U$ still satisfies~\eqref{uniformalpha2} (here $U(x',x_N)\to\alpha_+$ as $x_N\to\pm\infty$ uniformly in $x'\in\R^{N-1}$), while~$\phi(-\infty)=\alpha_-<\alpha_+$. One has then reached a contradiction, whence~\eqref{claim1bis} holds.\par
Similarly, we next show that
\be\label{claim2bis}
\sup_{\R^{N-1}\times[-A,A]}\big(\Delta u-\phi''_{\tau^*}-\lambda\,(u-\phi_{\tau^*})\big)<0.
\ee
Assume again by contradiction that this inequality does not hold. Then, by~\eqref{utau*bis}, there is a sequence $(y_n)_{n\in\N}=(y'_n,y_{N,n})_{n\in\N}$ of points in~$\R^{N-1}\times[-A,A]$ such that $\Delta u(y_n)-\phi''_{\tau^*}(y_{N,n})-\lambda\,(u(y_n)-\phi_{\tau^*}(y_{N,n}))\to0$ as $n\to+\infty$. Define {$\tilde{U}_n(x)=\tilde{U}_n(x',x_N)=u(x'+y'_n,x_N)$} in $\R^N$. Up to extraction of  a subsequence, one can assume that $y_{N,n}\to y_{N,\infty}\in[-A,A]$ as $n\to+\infty$ and that the functions $\tilde{U}_n$ converge in $C^4_{loc}(\R^N)$ to a classical solution~$\tilde{U}:\R^N\to[\alpha_-,\alpha_+]$ of~\eqref{fourth2} such that $\tilde{V}:=\tilde{U}-\phi_{\tau^*}\ge 0$ in $\R^N$, $\tilde{W}:=\Delta\tilde{V}-\lambda\tilde{V}\le 0$ in $\R^N$ and $\tilde{W}(0,y_{N,\infty})=0$. Let $\tilde{\lambda}=\omega/\lambda=\beta-\lambda>0$ be the other root of $\tilde{\lambda}^2-\beta\tilde{\lambda}+\omega=0$. As in the proof of Lemma~\ref{lem3}, the function $\tilde{W}$ is a $C^2(\R^N)$ solution of
$$\Delta\tilde{W}-\tilde{\lambda}\,\tilde{W}=(\varsigma(x)+\omega)\tilde{V},$$
where
$$\varsigma(x)=\frac{f(\tilde{U}(x))-f(\phi_{\tau^*}(x_N))}{\tilde{U}(x)-\phi_{\tau^*}(x_N)}\ \hbox{ if }\tilde{V}(x)=\tilde{U}(x)-\phi_{\tau^*}(x_N)\neq 0$$
and, say, $\varsigma(x)=0$ if $\tilde{V}(x)=\tilde{U}(x)-\phi_{\tau^*}(x_N)=0$. Since both functions $\tilde{U}$ and $\phi_{\tau^*}$ range in $[\alpha_-,\alpha_+]$ and since $\omega>0$ satisfies~\eqref{hypomega}, one has $\varsigma(x)+\omega\ge0$ for all $x\in\R^N$. Recalling that $\tilde{V}\ge0$ in $\R^N$, one infers that $\Delta\tilde{W}-\tilde{\lambda}\,\tilde{W}\ge0$ in $\R^N$. Since $\tilde{W}$ is nonpositive in $\R^N$ and vanishes at the point $(0,y_{N,\infty})$, the strong maximum principle implies that $\tilde{W}=0$ in $\R^N$. In other words, $\Delta\tilde{V}-\lambda\tilde{V}=0$ in $\R^N$. But the function $\tilde{V}$ is nonnegative and bounded. It then follows that $\tilde{V}=0$ in $\R^N$, that is, $\tilde{U}(x',x_N)=\phi(\xi+x_N-\tau^*)$ for all~$x=(x',x_N)\in\R^N$. The limits~\eqref{uniformalpha2} lead again to a contradiction, whence~\eqref{claim2bis} holds.\par
We can now conclude Claim 2. From~\eqref{claim1bis},~\eqref{claim2bis} and the uniform continuity of $u$, $\Delta u$, $\phi$ and $\phi''$, there is $\tau_*\in(0,\tau^*)$ such that
$$\phi_{\tau}\le u\ \hbox{ and }\ \phi_{\tau}''-\lambda\,\phi_{\tau}\ge\Delta u-\lambda\,u\ \hbox{ in }\R^{N-1}\times[-A,A]\ \hbox{ for all }\tau\in[\tau_*,\tau^*].$$
For any $\tau\in[\tau_*,\tau^*]$, since $u\ge\alpha_+-\delta$ in $\R^{N-1}\times[A,+\infty)$ by~\eqref{choiceA2}, it then follows from Lemma~\ref{lem3} applied to $z_1=\phi_{\tau}(\cdot+A)=\phi(\cdot+\xi+A-\tau)$ and $z_2=u(\cdot,\cdot+A)$ that
$$\phi_{\tau}\le u\ \hbox{ and }\ \phi_{\tau}''-\lambda\,\phi_{\tau}\ge\Delta u-\lambda\,u\ \hbox{ in }\R^{N-1}\times[A,+\infty).$$
Similarly, for any $\tau\in[\tau_*,\tau^*]$, since $\phi_{\tau}=\phi(\cdot+\xi-\tau)\le\alpha_-+\delta$ in $(-\infty,-A]$ by~\eqref{choiceA2} and $\tau\ge\tau_*\ge0$, it then follows from Lemma~\ref{lem4} applied to $z_1:=\phi_{\tau}(\cdot-A)$ and $z_2=u(\cdot,\cdot-A)$ that
$$\phi_{\tau}\le u\ \hbox{ and }\ \phi''_{\tau}-\lambda\,\phi_{\tau}\ge\Delta u-\lambda\,u\ \hbox{ in }\R^{N-1}\times(-\infty,-A].$$
Putting together the previous results implies that $\phi_{\tau}\le u$ and $\phi''_{\tau}-\lambda\,\phi_{\tau}\ge\Delta u-\lambda\,u$ in $\R^N$ for all $\tau\in[\tau_*,\tau^*]$. Since $\tau_*\in(0,\tau^*)$, this contradicts the minimality of $\tau^*$ in~\eqref{deftau*2} and thus implies $\tau^*=0$.\par

\medbreak

{\it Conclusion}. We have shown that $\phi_0\le u$ in $\R^N$, that is, $\phi(\xi+x_N)\le u(x',x_N)$ for all $(x',x_N)\in\R^N$. Since $\xi$ was an arbitrary real number, it follows by passing to the limit~$\xi\to+\infty$ that~$\alpha_+\le u(x',x_N)$ for all $(x',x_N)\in\R^N$. But $u$ was assumed to range in $[\alpha_-,\alpha_+]$. In other words, $u=\alpha_+$ in $\R^N$ and the proof of Theorem~\ref{th4} is thereby complete. 
\end{proof}

%%%%%%%%%%%%%%%%%%%%%%%%%%%%%%%%%%%%%%%%%%%%%%%
%%%%%%%%%%%%%%%%%%%%%%%%%%%%%%%%%%%%%%%%%%%%%%%

\end{document}